\title{The spectrum of a well-generated tensor triangulated category}
\dedicatory{To {\O}yvind Solberg on the occasion of his 60th birthday}
\numberwithin{equation}{section}
\theoremstyle{plain}
\newtheorem{theorem}[equation]{Theorem}
\newtheorem{proposition}[equation]{Proposition}
\newtheorem{lemma}[equation]{Lemma} 
\newtheorem{corollary}[equation]{Corollary}
\newtheorem{thmx}{Theorem}
\theoremstyle{definition}
\newtheorem{definition}[equation]{Definition}
\newtheorem{example}[equation]{Example}
\theoremstyle{remark}
\newtheorem{remark}[equation]{Remark}
\newcommand{\cl}{\operatorname{cl}}
\newcommand{\Hom}{\operatorname{Hom}}
\newcommand{\Obj}{\operatorname{Obj}}
\newcommand{\Spec}{\operatorname{Spc}}
\newcommand{\cat}[1]{{\mathbf{#1}}}
\newcommand{\ccat}[1]{{\mathcal{#1}}}
\newcommand{\Cohcat}[1][]{\cat{Coh}_{#1}}
\newcommand{\Distcat}[1][]{\cat{DLat}_{#1}}
\newcommand{\Framecat}{\cat{Frm}}
\newcommand{\Ord}{\cat{Ord}}
\newcommand{\Topcat}{\cat{Top}}
\newcommand{\extend}[3]{{{#3}\!\!\upharpoonright_#1^#2}}
\newcommand{\restrict}[3]{{{#3}\!\!\downharpoonright^#1_#2}}
\newcommand{\op}{\mathrm{op}}
\newcommand{\compact}[2][c]{#2^{#1}}
\newcommand{\principal}[1]{\left\downarrow#1\right.}
\newcommand{\set}[2]{\left\{#1 \,\middle|\, #2\right\}}
\newcommand{\Idl}[1][]{\operatorname{Idl}_{#1}}
\newcommand{\het}[1][]{\operatorname{ht}_{#1}}
\newcommand{\pt}[1][]{\operatorname{pt}_{#1}}
\newcommand{\loc}[1][]{\operatorname{loc}^{#1}}
\newcommand{\Loc}[1][]{\operatorname{Loc}^{#1}}
\newcommand{\rad}[1][]{\operatorname{rad}^{#1}}
\newcommand{\Rad}[1][]{\operatorname{Rad}^{#1}}
\newcommand{\one}{\mathds 1}
\newcommand{\iso}{\xrightarrow{\raisebox{-.4ex}[0ex][0ex]{$\scriptstyle{\sim}$}}}
\newcommand{\logeq}{\mathrel{\ratio\Longleftrightarrow}} 
\newcommand{\longiso}{\xrightarrow{\ \raisebox{-.4ex}[0ex][0ex]{$\scriptstyle{\sim}$}\ }}
\newcommand{\lto}{\longrightarrow}
\newcommand{\RadT}[1]{\Rad[#1](\compact[#1]{\ccat{T}})}
 \def\MR#1{}
\begin{document}

\begin{abstract}
For a tensor triangulated category and any regular cardinal $\alpha$ we study the frame of $\alpha$-localizing tensor ideals and its associated space of points. For a well-generated category and its frame of localizing tensor ideals we provide conditions such that the associated space is obtained by refining the topology of the corresponding space for the triangulated subcategory of $\alpha$-compact objects. This is illustrated by several known examples for $\alpha=\aleph_0$, and new spaces arise for $\alpha>\aleph_0$.
\end{abstract}

\keywords{Tensor triangulated category, well-generated triangulated category, localizing tensor ideal, frame, spectrum, support, stratification}

\subjclass[2020]{18G80 (primary); 18F70, 18C35 (secondary)}

\author[H.~Krause]{Henning Krause}
\address{Henning~Krause,
Fakult\"at f\"ur Mathematik,
Universit\"at Bielefeld,
33501 Bielefeld,
Germany}
\email{hkrause@math.uni-bielefeld.de}

\author[J.~C.~Letz]{Janina C. Letz}
\address{Janina~C.~Letz,
Fakult\"at f\"ur Mathematik,
Universit\"at Bielefeld,
33501 Bielefeld,
Germany}
\email{jletz@math.uni-bielefeld.de}

\maketitle

\setcounter{tocdepth}{1}
\tableofcontents

\section{Introduction}

For many triangulated categories arising in mathematical nature there is a concept of support, which depends on an appropriate notion of a spectrum. In order to further develop such a theory we extend basic concepts from tensor triangular geometry as initiated by Balmer \cite{Balmer:2005} to the class of well-generated triangulated categories introduced by Neeman \cite{Neeman:2001}. Our approach involves the frame of localizing tensor ideals, following the work of Kock and Pitsch \cite{Kock/Pitsch:2017}.

Given a well-generated tensor triangulated category $\ccat{T}$, we analyze the situation in which all radical localizing tensor ideals are parametrized by the open subsets of some topological space $\Spec(\ccat{T})$. Any well-generated triangulated category admits a filtration $\ccat{T}=\bigcup_{\alpha}\compact[\alpha]{\ccat{T}}$, where $\alpha$ runs through all regular cardinals and $\compact[\alpha]{\ccat{T}}$ denotes the full subcategory of $\alpha$-compact objects. The main result of this work can be summarized as follows.

\begin{thmx}\label{th:cofiltration}
The filtration $\ccat{T}=\bigcup_{\alpha}\compact[\alpha]{\ccat{T}}$ induces under appropriate assumptions a cofiltration
\begin{equation*}
\Spec(\ccat{T})=\bigcap_{\alpha}\Spec^\alpha(\compact[\alpha]{\ccat{T}})
\end{equation*}
where $\Spec^\alpha(\compact[\alpha]{\ccat{T}})$ denotes the space which parametrizes the radical $\alpha$-localizing tensor ideals of $\compact[\alpha]{\ccat{T}}$. In this cofiltration the underlying set of points remains fixed and the topology is refined when $\alpha$ increases. In particular, $\Spec(\ccat{T})=\Spec^\alpha(\compact[\alpha]{\ccat{T}})$ for some big enough regular cardinal $\alpha$.
\end{thmx}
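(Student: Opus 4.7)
The plan is to relate $\Rad(\ccat{T})$ to the filtered system of frames $\RadT{\alpha}$ arising from the inclusions $\compact[\alpha]{\ccat{T}} \subseteq \ccat{T}$, and then to dualize via the point functor. For regular cardinals $\alpha \leq \beta$, the inclusion $\compact[\alpha]{\ccat{T}} \subseteq \compact[\beta]{\ccat{T}}$ sends a radical $\alpha$-localizing tensor ideal to the radical $\beta$-localizing tensor ideal it generates, giving a frame homomorphism $\RadT{\alpha} \to \RadT{\beta}$. An analogous construction produces $\RadT{\alpha} \to \Rad(\ccat{T})$ by taking the radical localizing tensor ideal in $\ccat{T}$ generated by an $\alpha$-localizing ideal.

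First I would verify that each such map is injective, so that the $\RadT{\alpha}$ sit as a filtered system of subframes of $\Rad(\ccat{T})$, and that their directed union is all of $\Rad(\ccat{T})$. Injectivity requires that restriction to $\compact[\alpha]{\ccat{T}}$ recover a radical $\alpha$-localizing ideal from the full ideal it generates, which is precisely where the standing spatiality-type assumptions enter. Surjectivity onto $\Rad(\ccat{T})$ is essentially well-generation: every object of $\ccat{T}$ lies in some $\compact[\alpha]{\ccat{T}}$, so every localizing ideal is generated by $\alpha$-compact objects for $\alpha$ large enough. This identifies $\Rad(\ccat{T})$ with the filtered colimit $\operatorname{colim}_\alpha \RadT{\alpha}$ in the category of frames.

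Next I would pass to points. The functor $\pt$ carries filtered colimits of frames to cofiltered limits of spaces, yielding continuous maps $\Spec(\ccat{T}) \to \Spec^\alpha(\compact[\alpha]{\ccat{T}})$ whose induced topology on the source is the supremum of the pullback topologies coming from the $\Spec^\alpha$. The main obstacle, and the crux of the argument, will be showing that each of these maps is a bijection on underlying sets, which is what makes the cofiltration a genuine refinement of topologies on a common point set. Restriction of a prime of $\Rad(\ccat{T})$ to $\RadT{\alpha}$ is formal; the converse, that a prime of $\RadT{\alpha}$ extends uniquely to a prime of $\Rad(\ccat{T})$, should follow by combining spatiality of $\Rad(\ccat{T})$ with the fact that $\alpha$-compact generators determine primes, delivering the required Galois-type bijection between primes on either side.

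Finally, for the stabilization claim, I expect a cardinality argument: under the standing assumptions the collection of radical localizing tensor ideals has bounded cardinality, so for $\alpha$ sufficiently large every such ideal already arises from $\compact[\alpha]{\ccat{T}}$. The inclusion $\RadT{\alpha} \hookrightarrow \Rad(\ccat{T})$ is then an isomorphism of frames and the corresponding map $\Spec(\ccat{T}) \to \Spec^\alpha(\compact[\alpha]{\ccat{T}})$ is a homeomorphism.
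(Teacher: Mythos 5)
The central step of your argument --- that each map $\Spec(\ccat{T})\to\Spec^\beta(\compact[\beta]{\ccat{T}})$ is a bijection on underlying sets --- is precisely where your proposal has a genuine gap. You assert that a prime of $\RadT{\beta}$ extends uniquely to a prime of $\Rad(\ccat{T})$ ``by combining spatiality with the fact that compact generators determine primes'', but no such formal argument exists: a point of $\Rad(\ccat{T})$ is not determined by its values on the image of the extension map from a fixed level $\beta$, since a principal ideal $\rad(X)$ with $X$ only $\gamma$-compact for $\gamma>\beta$ need not be a join of elements in that image, and a point of $\RadT{\beta}$ need not lift at all. Indeed the bijection can fail outright even for $\beta=\aleph_0$ (Neeman's oddball Bousfield classes show that $\cat{D}(A)$ need not be compactly stratified), so it cannot follow from well-generation plus spatiality alone. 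The paper instead builds the bijection at the base level into the hypotheses: the ``appropriate assumptions'' are that $\ccat{T}$ is $\alpha$-compactly stratified in the sense of \cref{dfn:compactly-stratified}, which includes (CS4) the bijectivity of $\pt(\Rad(\ccat{T}))\to\pt(\RadT{\alpha})$ and (CS5) that every point of $\RadT{\alpha}$ is locally closed. The substance of the proof is then \cref{bijectionPointsRad}, which propagates the bijection to every $\beta\ge\alpha$ by factoring the extension $\RadT{\alpha}\to\RadT{\beta}\to\Rad(\ccat{T})$ and applying \cref{BijPoints}: for injective maps of spatial frames whose source has only locally closed points, bijectivity of the composite on points forces bijectivity of each factor. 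The locally-closed hypothesis enters through the maximality/honesty argument of \cref{maximalEltsBijPoints} (together with \cref{strongly-visible}); nothing in your sketch plays this role, and without it there is no way to exclude extra points appearing at the intermediate level $\RadT{\beta}$.

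Two smaller inaccuracies. The extension map is not automatically a frame homomorphism: it preserves joins, but finite meets only under the tensor property, via \cref{ExtensionInjective}; and its injectivity comes from the Neeman--Thomason localization theorem, which gives $\restrict{\beta}{\alpha}{(\extend{\alpha}{\beta}{\ccat{I}})}=\ccat{I}$ when $\ccat{T}$ is $\alpha$-compactly generated, not from ``spatiality-type assumptions''. Your stabilization argument is essentially the paper's: since $\Rad(\ccat{T})$ is a set and every ideal in it is generated by a set of objects, all ideals are generated by $\beta$-compact objects for $\beta$ big enough, and then the continuous bijections become homeomorphisms; the stronger identification $\Rad(\ccat{T})\cong\operatorname{colim}_\beta\RadT{\beta}$ in the category of frames is more than is needed and would require its own verification, whereas the paper only uses the compatible family of continuous maps.
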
 

The theorem is motivated by various examples where a classification of localizing tensor ideals is known:
\begin{enumerate}
\item[--] the derived category of a commutative noetherian ring \cite{Neeman:1992},
\item[--] the derived category of a noetherian scheme \cite{Alonso/Jeremias/Souto:2004},
\item[--] the bootstrap category of separable $C^*$-algebras \cite{DellAmbrogio:2011},
\item[--] the stable module category of a finite group scheme \cite{Benson/Iyengar/Krause/Pevtsova:2018a},
\item[--] the category of rational $G$-spectra for any compact Lie group $G$ \cite{Greenlees:2019}.
\end{enumerate}
In each case the space $\Spec^\alpha(\compact[\alpha]{\ccat{T}})$ for $\alpha=\aleph_0$ is known, describing the thick tensor ideals of $\compact[\alpha]{\ccat{T}}$ as in Balmer's work \cite{Balmer:2005}. Then our theorem says that for any regular $\beta\geq \alpha$ the $\beta$-localizing tensor ideals of $\compact[\beta]{\ccat{T}}$ correspond to open subsets of a space which is obtained from $\Spec^\alpha(\compact[\alpha]{\ccat{T}})$ by keeping the points and refining its topology. In the above examples eventually the space is discrete.

The starting point for our work is the observation that the radical localizing tensor ideals of a tensor triangulated category form a \emph{frame}, i.e.\@ a lattice in which the infinite-join distributive law holds; see \cref{RadAlphaFrame}. There are functors relating frames and topological spaces by assigning to each frame its space of points, and to each space its frame of open subsets; cf.\@ \cite{Johnstone:1982}. These functors restrict to an equivalence called \emph{Stone duality}. The frames in this equivalence are those with \emph{enough points}. Coherent frames correspond to topological spaces that are spectral, and every coherent frame has a Hochster dual \cite{Hochster:1969}. It is important to note that not all frames have enough points. This means the notion of a frame is the more general concept, even though in many examples arising from tensor triangulated categories we can pass to its associated space of points without loss of information.

For example, when $\cat{D}(A)$ is the derived category of a commutative ring $A$, then the frame of thick tensor ideals of $\cat{D}(A)^{\aleph_0}$ (i.e.\@ the category of perfect complexes in $\cat{D}(A)$) is well understood \cite{Hopkins:1987,Neeman:1992,Thomason:1997}; in particular the frame is coherent and has therefore enough points. The space corresponding to its Hochster dual frame identifies canonically with the Zariski spectrum $\operatorname{Spec}(A)$ consisting of the set of prime ideals of $A$. There is no analogue of Hochster duality for $\compact[\alpha]{\cat{D}(A)}$ when $\alpha>\aleph_0$. For that reason we disregard the Zariski topology in this work. This also explains why localizing tensor ideals identify with open sets, and not with Thomason sets.

For any regular cardinal $\alpha$ we consider a tensor triangulated category $\ccat{K}$ with $\alpha$-coproducts, for example the category $\compact[\alpha]{\ccat{T}}$ of $\alpha$-compact objects of a well-generated tensor triangulated category $\ccat{T}$. In order to describe the radical $\alpha$-localizing tensor ideals of $\ccat{K}$ we introduce the notion of an $\alpha$-coherent frame. This is very much in line with the pioneering work of Gabriel and Ulmer \cite{Gabriel/Ulmer:1971} on locally presentable categories.\footnote{While the concept of transfinite cardinals goes back to Cantor \cite{Cantor:1895, Cantor:1897, Dauben:1979}, it was Gabriel \cite{Gabriel:1962} who noticed that any reasonable cocomplete category admits a canonical filtration $\ccat A=\bigcup_{\alpha}\ccat A^\alpha$ where $\alpha$ runs through all regular cardinals. In fact, the theory of well-generated triangulated categories builds on \cite{Gabriel/Ulmer:1971}.} A consequence is a universal support theory for objects of $\ccat{K}$, extending the work in
\cite{Kock/Pitsch:2017} for $\alpha=\aleph_0$.

We denote by $\Rad[\alpha](\ccat{K})$ the collection of radical $\alpha$-localizing tensor ideals of $\ccat{K}$ and for an object $X\in\ccat{K}$ we write $\rad[\alpha](X)$ for the smallest element in $\Rad[\alpha](\ccat{K})$ containing $X$.

\begin{thmx}[\cref{th:Rad-support}]
Let $\alpha$ be a regular cardinal and $\ccat{K}$ a tensor triangulated category with $\alpha$-coproducts. Suppose that $\Rad[\alpha](\ccat{K})$ is a set and that
\begin{equation*}
\rad[\alpha](X\otimes Y) = \rad[\alpha](X) \cap \rad[\alpha](Y)
\end{equation*}
for all $X,Y \in \ccat{K}$. Then the frame $\Rad[\alpha](\ccat{K})$ is $\alpha$-coherent and the map
\begin{equation*}
\Obj(\ccat{K})\lto \Rad[\alpha](\ccat{K})\,,\quad X\mapsto \rad[\alpha](X)
\end{equation*}
is an $\alpha$-support; it is initial among all $\alpha$-supports on $\ccat{K}$. 
\end{thmx}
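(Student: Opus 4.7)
The plan is to prove the three assertions in sequence: $\alpha$-coherence of the frame $\Rad[\alpha](\ccat{K})$, the support axioms for the assignment $X\mapsto\rad[\alpha](X)$, and its initiality.

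For $\alpha$-coherence, I would use that $\Rad[\alpha](\ccat{K})$ is already known to be a frame by \cref{RadAlphaFrame} and identify the candidate $\alpha$-compact elements as the principal radical ideals $\rad[\alpha](X)$ for $X\in\ccat{K}$. Every $I\in\Rad[\alpha](\ccat{K})$ is visibly the join $\bigvee_{X\in I}\rad[\alpha](X)$, and this family of principals is closed under $\alpha$-small joins via $\rad[\alpha](\bigoplus_{i} X_i)=\bigvee_i\rad[\alpha](X_i)$. The crucial point is closure of the principals under the frame meet $\cap$: this is provided \emph{exactly} by the standing hypothesis $\rad[\alpha](X\otimes Y)=\rad[\alpha](X)\cap\rad[\alpha](Y)$, which plays the role of the distributivity datum needed to promote the frame to an $\alpha$-coherent one. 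The top element is $\rad[\alpha](\one)$ for $\one$ the tensor unit.

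Verifying the support axioms is then mostly bookkeeping: $\rad[\alpha](0)=0$ is immediate; $\rad[\alpha](\Sigma X)=\rad[\alpha](X)$ because radical $\alpha$-localizing ideals are closed under shifts; for a distinguished triangle $X\to Y\to Z$ both $X$ and $Z$ lie in $\rad[\alpha](X)\vee\rad[\alpha](Z)$, hence so does $Y$; for any index set $I$ of cardinality smaller than $\alpha$ the identity $\rad[\alpha](\bigoplus_{i\in I} X_i)=\bigvee_{i\in I}\rad[\alpha](X_i)$ is forced by the $\alpha$-localizing condition; and the tensor-meet identity is exactly the hypothesis.

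The heart of the argument is initiality, and this is where I expect the main obstacle. The definition of $\alpha$-support presumably axiomatizes precisely the closure operations defining a radical $\alpha$-localizing tensor ideal. Given such a $\sigma\colon\Obj(\ccat{K})\to L$ with $L$ an $\alpha$-coherent frame, I would produce the unique frame homomorphism $\phi\colon\Rad[\alpha](\ccat{K})\to L$ with $\phi(\rad[\alpha](X))=\sigma(X)$ in two steps. First, show the monotonicity statement $X\in\rad[\alpha](Y)\Rightarrow\sigma(X)\le\sigma(Y)$, which is the delicate step; I would prove it by transfinite induction along the generation of $\rad[\alpha](Y)$ from $Y$, noting that each closure step (shift, triangle, summand, $\alpha$-small coproduct, tensor with arbitrary objects) is matched by a support axiom that preserves the inequality $\sigma(-)\le\sigma(Y)$. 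This in particular shows that the rule $\rad[\alpha](X)\mapsto\sigma(X)$ is well defined on compact generators. Extending $\phi$ to a morphism of frames is then formal: define $\phi(I):=\bigvee_{X\in I}\sigma(X)$, and check preservation of finite meets using the tensor-meet axiom on $\sigma$ combined with $\alpha$-coherence on both sides, and preservation of arbitrary joins from the definition.
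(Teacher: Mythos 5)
Your overall architecture is the same as the paper's, but there is a genuine gap in the coherence step. You \emph{declare} the principal radical ideals to be the $\alpha$-compact elements of $\Rad[\alpha](\ccat{K})$ and then verify only closure properties: every ideal is the join of its principals, the principals are closed under $\alpha$-joins (via $\rad[\alpha](\coprod_i X_i)=\bigvee_i\rad[\alpha](X_i)$, \cref{le:coproduct-join}) and under finite meets (the tensor hypothesis), and the top element is $\rad[\alpha](\one)$. None of this shows that a principal ideal actually \emph{is} $\alpha$-compact in the frame-theoretic sense, i.e.\ that $\rad[\alpha](X)\le\bigvee A$ forces $\rad[\alpha](X)\le\bigvee A'$ for some $A'\subseteq A$ with $|A'|<\alpha$; and since conditions (C1)--(C3) refer to the genuine $\alpha$-compact elements, your verifications do not yield $\alpha$-coherence without that identification. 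This is exactly the content of the paper's \cref{alphaCompactPrincipal}, whose nontrivial half rests on \cref{AlphaBuilding}: if $X\in\rad[\alpha](\ccat{X})$ then $X\in\rad[\alpha](\ccat{X}')$ for some $\ccat{X}'\subseteq\ccat{X}$ with $|\ccat{X}'|<\alpha$, proved by checking that the union of the $\rad[\alpha](\ccat{X}')$ over all such small subsets is itself a radical $\alpha$-localizing tensor ideal --- regularity of $\alpha$ is what makes this union closed under $\alpha$-coproducts. You need this argument; once it is in place, the converse (every $\alpha$-compact element is principal) does follow from the closure facts you list, and the rest of your coherence argument goes through.

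The support axioms and the initiality part essentially reproduce the paper's proof. For monotonicity the paper argues more directly: $\set{Z\in\ccat{K}}{\sigma(Z)\le\sigma(Y)}$ is itself a radical $\alpha$-localizing tensor ideal containing $Y$, hence contains $\rad[\alpha](Y)$; your transfinite induction is this argument unwound, but if you run it you must also include the radical-closure step among the generating operations (it is missing from your list), which (S4) handles since $\sigma(Z\otimes Z)=\sigma(Z)$ --- this matters because for $\alpha>\aleph_0$ the radical closure is not given by the naive tensor-power formula. Finally, an $\alpha$-support takes values in an arbitrary frame, not an $\alpha$-coherent one, so preservation of finite meets by $\varphi(\ccat{I})=\bigvee_{X\in\ccat{I}}\sigma(X)$ should be checked using the infinite-join distributive law in the target together with (S4), not ``$\alpha$-coherence on both sides''; and preservation of joins again uses the monotonicity observation rather than being purely definitional.
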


There is also a version of this theorem for radical localizing tensor ideals; see \cref{th:Rad-support-infinite}. When $\Rad[\alpha](\ccat{K})$ has enough points, then this theorem can rephrased as follows. The radical $\alpha$-localizing ideals of $\ccat{K}$ are classified by the open sets of a topological space whose topology is completely determined by the support of the objects of $\ccat{K}$. 

This brings us back to the study of a well-generated tensor triangulated category $\ccat{T}$. In general there is not much hope to describe all localizing tensor ideals in terms of some topological space. This is the reason to introduce a class of well-generated tensor triangulated categories, where such a topological space is a refinement of the topological space describing the $\alpha$-localizing tensor ideals of $\compact[\alpha]{\ccat{T}}$ for some regular cardinal $\alpha$. We call such categories \emph{$\alpha$-compactly stratified}; see \cref{dfn:compactly-stratified}. Roughly speaking, this means $\ccat{T}$ is generated by $\alpha$-compact objects, a topological space classifying the radical localizing tensor ideals exists, and the canonical map $\Spec(\ccat{T})\to \Spec^\alpha(\compact[\alpha]{\ccat{T}})$ is a bijection.

For compactly generated triangulated categories (i.e.\@ $\alpha=\aleph_0$) the notion of stratification goes back to \cite{Benson/Iyengar/Krause:2011} and has been further developed in the tensor triangulated context, for example in \cite{Balmer/Favi:2011, Barthel/Heard/Sanders:2021}. Recent work of Balchin and Stevenson \cite{Balchin/Stevenson:2021} involves new topological spaces beyond that given by the category of compact objects.

With a reasonable notion of $\alpha$-compact stratification, our main theorem is based on the following.

\begin{thmx}[\cref{bijectionPointsRad}]
Let $\ccat{T}$ be a well-generated tensor triangulated category and let $\alpha\le\beta$ be regular cardinals. If $\ccat{T}$ is $\alpha$-compactly stratified, then $\ccat{T}$ is $\beta$-compactly stratified.
\end{thmx}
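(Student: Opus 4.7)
The definition of $\beta$-compact stratification has three conditions: $\ccat{T}$ is generated by $\compact[\beta]{\ccat{T}}$, a topological space classifies the radical localizing tensor ideals of $\ccat{T}$, and the canonical map $g_\beta\colon\Spec(\ccat{T})\to\Spec^\beta(\compact[\beta]{\ccat{T}})$ is a bijection. The first two conditions are inherited from the $\alpha$-compact stratification hypothesis, since $\compact[\alpha]{\ccat{T}}\subseteq\compact[\beta]{\ccat{T}}$ and the classifying space $\Spec(\ccat{T})$ does not depend on the auxiliary regular cardinal. The whole content of the theorem is therefore the bijectivity of $g_\beta$.

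The inclusions $\compact[\alpha]{\ccat{T}}\subseteq\compact[\beta]{\ccat{T}}\subseteq\ccat{T}$ induce, by extension of ideals, a commutative triangle of frames, and applying the points functor produces a continuous factorization
\[
\Spec(\ccat{T})\xrightarrow{g_\beta}\Spec^\beta(\compact[\beta]{\ccat{T}})\xrightarrow{f}\Spec^\alpha(\compact[\alpha]{\ccat{T}})
\]
of the $\alpha$-comparison map, which is a bijection by hypothesis. This immediately yields injectivity of $g_\beta$ and surjectivity of $f$, and it is enough to prove that $f$ is injective: every fiber of $f$ is then a singleton, so the injection $g_\beta$ must hit every point of $\Spec^\beta(\compact[\beta]{\ccat{T}})$.

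To establish the injectivity of $f$, the plan is to translate explicitly via \cref{th:Rad-support}: points of $\Spec^\beta$ are meet-prime elements of $\Rad[\beta](\compact[\beta]{\ccat{T}})$, and $f$ acts by restriction $Q\mapsto Q\cap\compact[\alpha]{\ccat{T}}$. Given two such primes $Q_1,Q_2$ with identical $\alpha$-restrictions, I would compare the radical localizing tensor ideals $\mathcal{L}_i\subseteq\ccat{T}$ generated by them; these agree on $\compact[\alpha]{\ccat{T}}$, and because $\ccat{T}$ is $\alpha$-compactly generated, a radical localizing tensor ideal is determined by its intersection with $\compact[\alpha]{\ccat{T}}$, giving $\mathcal{L}_1=\mathcal{L}_2$. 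Restricting back to $\compact[\beta]{\ccat{T}}$ and invoking that each $Q_i$ is recovered from $\mathcal{L}_i$ forces $Q_1=Q_2$.

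The principal obstacle is this last chain of implications: that a radical localizing tensor ideal in $\ccat{T}$ is determined by its $\compact[\alpha]{\ccat{T}}$-part, and that $Q_i=\mathcal{L}_i\cap\compact[\beta]{\ccat{T}}$. Both rely on the careful interplay between $\alpha$-compact generation of $\ccat{T}$, the universality of $\rad[\beta]$ supplied by \cref{th:Rad-support}, and the spatiality of $\Rad(\ccat{T})$ afforded by the stratification hypothesis; closing this loop is where the technical work will concentrate.
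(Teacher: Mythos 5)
Your reduction is fine as far as it goes: conditions (CS1)--(CS3) of \cref{dfn:compactly-stratified} pass from $\alpha$ to $\beta$ for trivial reasons, the comparison maps factor as you say, and bijectivity of the composite $\Spec(\ccat{T})\to\Spec^\alpha(\compact[\alpha]{\ccat{T}})$ reduces everything to injectivity of $f\colon\Spec^\beta(\compact[\beta]{\ccat{T}})\to\Spec^\alpha(\compact[\alpha]{\ccat{T}})$, which on prime ideals is indeed $Q\mapsto Q\cap\compact[\alpha]{\ccat{T}}$. The gap is the lemma you invoke to get that injectivity: it is \emph{not} true that $\alpha$-compact generation forces a radical localizing tensor ideal of $\ccat{T}$ to be determined by its intersection with $\compact[\alpha]{\ccat{T}}$. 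What Neeman's localization theorem gives (see \cref{eq:ExtRestWellGenerated}) is the other composite, $\rad[\beta](\ccat{I})\cap\compact[\alpha]{\ccat{T}}=\ccat{I}$ for ideals $\ccat{I}$ of the $\alpha$-compacts, i.e.\ extension is injective; injectivity of \emph{restriction} on $\Rad(\ccat{T})$ is equivalent to every radical localizing tensor ideal being generated by $\alpha$-compact objects, which is precisely what is not assumed here --- it is the extra condition under which the comparison maps become homeomorphisms rather than bijections that refine the topology. Concretely, for $\ccat{T}=\cat{D}(A)$ with $A$ a discrete valuation ring (a compactly stratified example), the zero ideal and the localizing tensor ideal generated by the fraction field of $A$ both meet the perfect complexes in $0$, and both are generated by $\beta$-compact objects for suitable $\beta$; so your step ``$\mathcal{L}_1=\mathcal{L}_2$'' fails in general, and nothing in your argument uses the primality of the $Q_i$ to rescue it.

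This is exactly where the paper's hypotheses (and its extra machinery) come in: the proof uses that every point of $\Rad[\alpha](\compact[\alpha]{\ccat{T}})$ is locally closed, i.e.\ maximal in some open set, together with spatiality (\cref{RadalphaSpatial}) and injectivity of the extension frame maps (\cref{ExtensionInjective}), to show via \cref{maximalEltsBijPoints} that such maximal points are ``honest'' for the frame map, and then \cref{BijPoints} forces each fibre of $f$ to be a singleton. Your proposal never uses (CS5), and without it (or some substitute exploiting primality of the $Q_i$) the injectivity of $f$ does not follow. Note also that your recollection of the definition omits (CS5) altogether, so even after fixing the bijection you would still owe the verification that every point of $\Rad[\beta](\compact[\beta]{\ccat{T}})$ is locally closed; in the paper this is \cref{strongly-visible}, applied once the point maps are known to be injective.
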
 

While the above examples are $\aleph_0$-compactly stratified, there are good reasons to keep our approach more flexible and to allow cardinals $\alpha>\aleph_0$. In fact, well-generated triangulated categories are stable under taking localizing subcategories that are generated by sets of objects and their corresponding quotients. The following theorem illustrates the advantage of working with well-generated categories, since any set of objects of a well-generated category is $\alpha$-compact for some sufficiently big regular cardinal $\alpha$.

\begin{thmx}[\cref{co:bijectionPointsRad}]
Let $\ccat{T}$ be a well-generated tensor triangulated category and suppose that $\ccat{T}$ is $\alpha$-compactly stratified. If $\ccat{S}\subseteq\ccat{T}$ is a radical localizing tensor ideal, then the quotient $\ccat{T}/\ccat{S}$ is $\beta$-compactly stratified for some regular cardinal $\beta\ge\alpha$.
\end{thmx}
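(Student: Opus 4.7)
The plan is to exploit the fact, already established by the preceding theorem, that $\alpha$-compact stratification automatically propagates up the cardinal hierarchy: once $\ccat{T}/\ccat{S}$ is shown to be $\beta$-compactly stratified for some $\beta$, it is so for every larger regular cardinal, so we have room to enlarge $\beta$ as needed. The targets for $\beta$ are threefold: (i) $\ccat{T}/\ccat{S}$ is $\beta$-compactly generated; (ii) its frame of radical localizing tensor ideals has enough points, so $\Spec(\ccat{T}/\ccat{S})$ exists; and (iii) the canonical map $\Spec(\ccat{T}/\ccat{S})\to\Spec^\beta(\compact[\beta]{(\ccat{T}/\ccat{S})})$ is a bijection.

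First I would argue that $\ccat{S}$ is generated, as a radical localizing tensor ideal, by a set of $\alpha$-compact objects. Since $\ccat{T}$ is $\alpha$-compactly stratified, $\ccat{S}$ corresponds to an open set $U\subseteq\Spec(\ccat{T})=\Spec^\alpha(\compact[\alpha]{\ccat{T}})$. Because that space is $\alpha$-coherent, $U$ is a union of basic opens indexed by a subset $\mathcal{G}\subseteq\compact[\alpha]{\ccat{T}}$, and $\mathcal{G}$ is a set since $\compact[\alpha]{\ccat{T}}$ is essentially small. Neeman's theorem on well-generated categories then yields that $\ccat{T}/\ccat{S}$ is well-generated, and enlarging $\beta\ge\alpha$ if necessary we may arrange that $\ccat{T}/\ccat{S}$ is $\beta$-compactly generated and that the quotient functor sends $\compact[\beta]{\ccat{T}}$ to a generating family of $\compact[\beta]{(\ccat{T}/\ccat{S})}$.

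Next I would identify $\Spec(\ccat{T}/\ccat{S})$ with the closed subspace $V := \Spec(\ccat{T})\setminus U$. Under Verdier localization, radical localizing tensor ideals of $\ccat{T}/\ccat{S}$ are in bijection with radical localizing tensor ideals of $\ccat{T}$ containing $\ccat{S}$, hence with open sets of $\Spec(\ccat{T})$ containing $U$, hence with open sets of $V$. As the frame of opens of $V$ is spatial with underlying point set $V$, this shows the frame of radical localizing tensor ideals of $\ccat{T}/\ccat{S}$ has enough points and $\Spec(\ccat{T}/\ccat{S}) \cong V$ as topological spaces.

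The hard part will be the bijection with $\Spec^\beta(\compact[\beta]{(\ccat{T}/\ccat{S})})$. By the previous theorem $\ccat{T}$ is itself $\beta$-compactly stratified, so points of $V$ correspond to radical $\beta$-localizing tensor ideals of $\compact[\beta]{\ccat{T}}$ containing $\ccat{S}\cap\compact[\beta]{\ccat{T}}$. The main obstacle is then to compare $\compact[\beta]{(\ccat{T}/\ccat{S})}$ with the Verdier quotient $\compact[\beta]{\ccat{T}}/(\ccat{S}\cap\compact[\beta]{\ccat{T}})$; by enlarging $\beta$ one expects these to agree up to direct summands, at which point the matching of radical $\beta$-localizing tensor ideals on both sides follows from the naturality of $X\mapsto\rad[\beta](X)$ together with the universal support property furnished by Theorem B applied to both $\compact[\beta]{\ccat{T}}$ and $\compact[\beta]{(\ccat{T}/\ccat{S})}$.
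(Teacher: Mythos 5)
There is a genuine gap at the very first step, and it sits exactly where the subtlety of the statement lies. You claim that $\ccat{S}$ is generated by a set of $\alpha$-compact objects because it ``corresponds to an open set of $\Spec(\ccat{T})=\Spec^\alpha(\compact[\alpha]{\ccat{T}})$'' and that space is $\alpha$-coherent. But $\alpha$-compact stratification only asserts (CS4) a \emph{bijection on points}; the topology of $\Spec(\ccat{T})$ is in general strictly finer than that of $\Spec^\alpha(\compact[\alpha]{\ccat{T}})$ --- this refinement is precisely the content of \cref{th:cofiltration}. Equivalently, the extension map $\Rad[\alpha](\compact[\alpha]{\ccat{T}})\to\Rad(\ccat{T})$ is injective but in general not surjective, so a radical localizing tensor ideal need not be a join of ideals $\rad(X)$ with $X$ $\alpha$-compact. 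For instance $\cat{D}(\BZ)$ is compactly stratified, yet the localizing tensor ideal generated by $\mathbb{Q}$ corresponds to the single point $\{(0)\}$, which is not open in $\Spec^{\aleph_0}$ of the perfect complexes; this ideal is not generated by compact objects. (Were your claim true, the corollary would yield that $\ccat{T}/\ccat{S}$ is again $\alpha$-compactly stratified, which fails in general --- such quotients need not even be $\alpha$-compactly generated.) The correct route, and the one the paper takes, is that the definition of stratification already assumes the localizing tensor ideals of $\ccat{T}$ form a set, so by \cref{le:Rad-set} the ideal $\ccat{S}$ is generated by \emph{some} set of objects, and any set of objects of a well-generated category is $\beta$-compact for a sufficiently large regular $\beta\geq\alpha$; it is this choice of $\beta$, depending on $\ccat{S}$, that the statement is really about.

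The error propagates into your final step: the comparison of $\pt(\Rad(\ccat{T}/\ccat{S}))$ with $\pt(\Rad[\beta](\compact[\beta]{(\ccat{T}/\ccat{S})}))$ hinges on the identity $\rad(\compact[\beta]{\ccat{S}})=\ccat{S}$, which is exactly what the choice of $\beta$ above guarantees; it ensures both that Neeman's theorem gives $\compact[\beta]{(\ccat{T}/\ccat{S})}\simeq\compact[\beta]{\ccat{T}}/\compact[\beta]{\ccat{S}}$ and that, under the bijection $\pt(\Rad(\ccat{T}))\cong\pt(\RadT{\beta})$ coming from \cref{bijectionPointsRad}, the open set $U(\ccat{S})$ is identified with $U(\compact[\beta]{\ccat{S}})$. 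An appeal to ``naturality of $\rad[\beta]$ and the universal support property'' does not substitute for this. Finally, two of the defining conditions for stratification of the quotient are left unverified in your outline: the tensor property for $\ccat{T}/\ccat{S}$ (in the paper this comes from the surjective frame morphism $\Rad(\ccat{T})\to\Rad(\ccat{T}/\ccat{S})$, $\ccat{I}\mapsto\ccat{I}\vee\ccat{S}$) and (CS5), that every point of $\Rad[\beta](\compact[\beta]{(\ccat{T}/\ccat{S})})$ is locally closed, which follows from the corresponding property of $\RadT{\beta}$ via \cref{strongly-visible}.
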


\subsubsection*{Acknowledgements}

We are grateful to several colleagues for their interest and many
helpful comments. These include Paul Balmer, Tobias Barthel, Amnon
Neeman, Greg Stevenson, and last but not least an anonymous referee.

\section{Stone duality}

We recall the definition of a frame and review the correspondence between frames and topological spaces, which is known as Stone duality.

\subsection*{Frames}

A cardinal $\alpha$ is called \emph{regular} if $\alpha$ is not the sum of fewer than $\alpha$ cardinals, all smaller than $\alpha$. For example, the cardinal $\aleph_0$ is regular because the sum of finitely many finite cardinals is finite. Also, the successor $\kappa^+$ of every infinite cardinal $\kappa$ is regular. In particular, there are arbitrarily large regular cardinals.

For a set $A$ we denote by $|A|$ its cardinality.

Let $L = (L,\le)$ be a partially ordered set (poset for short). Then $L$ is a \emph{lattice} provided all finite non-empty joins and finite non-empty meets exist. Join and meet are denoted by $\vee$ and $\wedge$, respectively. For a subset $A \subseteq L$ we write $\bigvee A$ and $\bigwedge A$ for the join and meet of the elements of $A$, respectively. If $|A| < \alpha$, we say $\bigvee A$ is an \emph{$\alpha$-join}. We denote the greatest and least elements, if they exist, by $1$ and $0$, respectively.

Fix a cardinal $\alpha$. A lattice $L$ satisfies the \emph{$\alpha$-join distributive law}, when
\begin{equation} \label{eq:alphaDL}
b \wedge \bigvee A = \bigvee_{a \in A} (b \wedge a) \quad\text{for all } b \in L \text{ and } A \subseteq L \text{ with } |A| < \alpha\,.
\end{equation}
A lattice $L$ satisfies the \emph{infinite-join distributive law} when \cref{eq:alphaDL} holds for any regular cardinal $\alpha$.

We say a lattice $L$ is \emph{$\alpha$-distributive}, if the greatest and least element exist, any $\alpha$-join exists and the $\alpha$-join distributive law holds. We define a morphism of $\alpha$-distributive lattices as a map of lattices that respects the greatest and least element, $\alpha$-joins and finite meets. The category of $\alpha$-distributive lattices is denoted by $\Distcat[\alpha]$.

It is common to drop the cardinal $\alpha$ when $\alpha=\aleph_0$. So a lattice is \emph{distributive} if it is $\aleph_0$-distributive, and $\Distcat=\Distcat[\aleph_0]$. 

A lattice $F$ is called a \emph{frame} if it is $\alpha$-distributive for every cardinal $\alpha$. We define a morphism of frames as a map of lattices that respects arbitrary joins and finite meets. The category of frames is denoted by $\Framecat$.

\subsection*{Prime elements versus points}

An element $p$ of a lattice $L$ is \emph{prime}, if $p \neq 1$ and
\begin{equation*}
a \wedge b \leq p \,\implies\, a \leq p \text{ or } b \leq p\,.
\end{equation*}
In a frame $F$ the prime elements correspond to the points. A \emph{point} of a frame $F$ is a frame morphism $x\colon F\to\{0,1\}$. Given a point $x$, the corresponding prime element is 
\begin{equation*}
p_x \colonequals \bigvee x^{-1}(0)\,.
\end{equation*}
The set of points $\pt(F)$, and thus the set of prime elements, form a topological space with the open sets given by
\begin{equation*}
U(a) \colonequals \set{x \in \pt(F)}{x(a) = 1} \quad\text{for } a \in F\,.
\end{equation*}

\subsection*{Frames versus spaces}

We denote the category of topological spaces with continuous maps by $\Topcat$. For a topological space we consider the frame of its open sets; this is ordered by inclusion, joins are given by unions, and finite meets by intersections. Sending a topological space to its frame of open sets yields a contravariant functor $\Omega \colon \Topcat \to \Framecat^\op$. This functor has a right adjoint, the functor of points $\pt \colon \Framecat^\op \to \Topcat$. 

The topological spaces occurring as the space of points of a frame are precisely the sober spaces; these include any Hausdorff space and any topological space underlying a scheme. Recall that a topological space is \emph{sober}, if every irreducible closed set has a unique generic point; that is if $V$ is an irreducible closed set, then there exists a unique $x \in V$ with $\cl(x) = V$.

The adjunction between topological spaces and frames
\begin{equation} \label{eq:AdjointOmegaPt}
\begin{tikzcd}[column sep=huge]
\Topcat \ar[r,shift left,"\Omega"] & \Framecat^\op \ar[l,shift
left,"\pt"]
\end{tikzcd}
\end{equation}
identifies the sober spaces with the spatial frames \cite[II.1.7]{Johnstone:1982}; this is called \emph{Stone duality}. A frame $F$ is \emph{spatial} if it has enough points: for all $a,b\in F$ satisfying $a\not\leq b$ there is a point $x$ such that $x(a)=1$ and $x(b)=0$.

\section{Coherent frames and Hochster duality}

Any frame may be viewed as a category, and following the theory of
locally $\alpha$-presentable categories \cite{Gabriel/Ulmer:1971} we
develop the theory of $\alpha$-coherent frames. This generalizes the
well-known notion of a coherent frame. More specifically, one may think of a lattice $L$ as a category $\ccat L$ where the objects of $\ccat L$ are the elements of $L$ and
\begin{equation*}
\Hom(a,b) \neq \varnothing \,\iff\, |\Hom(a,b)| = 1 \,\iff\, a \leq b \quad\text{for } a, b \in L\,.
\end{equation*}
Note that meet and join in $L$ correspond to product and coproduct in
$\ccat L$, respectively. Via the assignment $L\mapsto\ccat L$ an
$\alpha$-coherent frame corresponds to a locally $\alpha$-presentable category in the sense of \cite{Gabriel/Ulmer:1971}.

\subsection*{Coherent frames}

Let $L$ be a lattice and $\alpha$ a cardinal. An element $a \in L$ is \emph{$\alpha$-compact}, if for every set $A \subseteq L$ whose join exists and $a \leq \bigvee A$ there exists a subset $B \subseteq A$ with $|B| < \alpha$ and $a \leq \bigvee B$. We denote the subset of $\alpha$-compact elements of $L$ by $\compact[\alpha]{L}$.

A frame $F$ is \emph{$\alpha$-coherent}, if
\begin{enumerate}
\item[(C1)] every element is the join of $\alpha$-compact elements,
\item[(C2)] the greatest element $1$ is $\alpha$-compact, and
\item[(C3)] the $\alpha$-compact elements are closed under $\alpha$-joins and finite meets.
\end{enumerate}
The latter two conditions imply that $\compact[\alpha]{F}$ is an $\alpha$-distributive sublattice of $F$. When $\alpha$ is regular, then $\compact[\alpha]{F}$ is closed under $\alpha$-joins. We define a morphism of $\alpha$-coherent frames as a morphism of frames that restricts to a morphism on the sublattices of $\alpha$-compact elements. We denote the category of $\alpha$-coherent frames by $\Cohcat[\alpha]$.

For cardinals $\alpha \leq \beta$, we have $\compact[\alpha]{F}\subseteq \compact[\beta]{F}$. Moreover, we have $F = \compact[\alpha]{F}$ for $\alpha=\sup(|F|^+,\aleph_0)$, and therefore every frame is $\alpha$-coherent for some regular cardinal $\alpha$.

It is common to drop the cardinal $\alpha$ when $\alpha=\aleph_0$. So an element is \emph{compact} if it is $\aleph_0$-compact, a frame is \emph{coherent} if it is $\aleph_0$-coherent, and $\Cohcat=\Cohcat[\aleph_0]$. Any coherent frame is spatial \cite[II.3.4]{Johnstone:1982}. However, the same does not hold for $\alpha$-coherent frames when $\alpha>\aleph_0$, since any frame is $\alpha$-coherent for some sufficiently big cardinal $\alpha$.

\begin{lemma} \label{coherentAscent}
Let $\alpha \leq \beta$ be regular cardinals. 
\begin{enumerate}
\item\label{coherentAscent:Ascent} If $F$ is an $\alpha$-coherent frame, then the $\beta$-compact elements are precisely the $\beta$-joins of $\alpha$-compact elements, and $F$ is $\beta$-coherent.
\item\label{coherentAscent:Compact} If $F$ is a $\beta$-coherent frame, then
\begin{equation*}
\compact[\alpha]{(\compact[\beta]{F})} = \compact[\alpha]{F}\,.
\end{equation*}
\end{enumerate}
\end{lemma}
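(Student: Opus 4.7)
The plan is to prove part (1) by first establishing the characterization of $\beta$-compact elements as $\beta$-joins of $\alpha$-compact elements, and then use this description to verify the three coherence axioms. Part (2) will follow by carefully tracking in which lattice each join is being computed.

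For (1), I would first show that any $\beta$-join $a = \bigvee_{i \in I} a_i$ with $|I| < \beta$ and each $a_i \in \compact[\alpha]{F}$ is $\beta$-compact: given $a \le \bigvee A$, apply $\alpha$-compactness to each $a_i$ to obtain $B_i \subseteq A$ with $|B_i| < \alpha$ and $a_i \le \bigvee B_i$, then set $B = \bigcup_{i \in I} B_i$. Since $\beta$ is regular and $|B|$ is a sum of $|I| < \beta$ cardinals each less than $\alpha \le \beta$, one has $|B| < \beta$ and $a \le \bigvee B$. Conversely, any $\beta$-compact $a$ can be written as $a = \bigvee A$ with $A \subseteq \compact[\alpha]{F}$ by axiom (C1) for the $\alpha$-coherent frame $F$, and $\beta$-compactness then extracts the desired subset of cardinality $< \beta$. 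To conclude that $F$ is $\beta$-coherent, note that (C1) is immediate because $\compact[\alpha]{F} \subseteq \compact[\beta]{F}$, (C2) holds because $1 \in \compact[\alpha]{F}$, closure of $\compact[\beta]{F}$ under $\beta$-joins follows by flattening a $\beta$-join of $\beta$-joins of $\alpha$-compacts using regularity of $\beta$, and closure under finite meets follows by applying the frame distributive law $(\bigvee a_i) \wedge (\bigvee b_j) = \bigvee_{i,j}(a_i \wedge b_j)$, the identity $\beta \cdot \beta = \beta$, and closure of $\compact[\alpha]{F}$ under finite meets.

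For (2), the inclusion $\compact[\alpha]{F} \subseteq \compact[\alpha]{(\compact[\beta]{F})}$ is immediate from the definitions and the containment $\compact[\alpha]{F} \subseteq \compact[\beta]{F}$. For the reverse inclusion, I would take $a \in \compact[\alpha]{(\compact[\beta]{F})}$ and any $A \subseteq F$ with $a \le \bigvee A$, then use axiom (C1) for the $\beta$-coherent $F$ to write each $c \in A$ as $c = \bigvee A_c$ with $A_c \subseteq \compact[\beta]{F}$. The union $A' = \bigcup_{c \in A} A_c \subseteq \compact[\beta]{F}$ satisfies $\bigvee A' = \bigvee A \ge a$, so $\alpha$-compactness of $a$ inside $\compact[\beta]{F}$ produces $B' \subseteq A'$ with $|B'| < \alpha$ and $a \le \bigvee B'$. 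Choosing for each $b' \in B'$ a witness $c(b') \in A$ with $b' \le c(b')$ yields $B = \{c(b') : b' \in B'\} \subseteq A$ with $|B| \le |B'| < \alpha$ and $a \le \bigvee B$, which is exactly the $\alpha$-compactness of $a$ in $F$.

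The main obstacle is keeping joins consistent in part (2): a priori, joins in the sublattice $\compact[\beta]{F}$ could differ from joins in $F$, but by (1) the sublattice is closed under $\beta$-joins, so the two computations agree on subsets of size $< \alpha \le \beta$, which is all the argument requires.
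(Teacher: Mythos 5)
Your proposal is correct and follows essentially the same route as the paper's proof: part (1) via regularity of $\beta$ to bound the union of the witnessing sets $B_i$ and the distributive law to handle finite meets, and part (2) via decomposing each cover element into $\beta$-compacts using (C1), applying $\alpha$-compactness inside $\compact[\beta]{F}$, and selecting witnesses in the original cover. Your closing remark about joins in $\compact[\beta]{F}$ agreeing with those in $F$ on small subsets is a point the paper leaves implicit, but it does not change the argument.
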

\begin{proof}
We first assume $F$ is $\alpha$-coherent. We need to show that any $\beta$-join of $\alpha$-compact elements is $\beta$-compact. Let $A \subseteq \compact[\alpha]{F}$ with $|A| < \beta$ and $a \colonequals \bigvee A$. We assume
\begin{equation*}
a \leq \bigvee B \quad\text{for } B \subseteq F\,.
\end{equation*}
Then $a' \leq \bigvee B$ for all $a' \in A$. Since any $a'$ is $\alpha$-compact, there exists $B_{a'} \subseteq B$ with $|B_{a'}| < \alpha$ and $a' \leq \bigvee B_{a'}$. Then
\begin{equation*}
a \leq \bigvee A \leq \bigvee_{a' \in A} \bigvee B_{a'}\,.
\end{equation*}
Since $\sum_{a' \in A} |B_{a'}| < \beta$, this shows that $a$ is $\beta$-compact. 

To show $F$ is $\beta$-coherent, it is enough to show that the meet $a \wedge b$ of $\beta$-compact elements $a$ and $b$ is $\beta$-compact. By the first assertion we can write
\begin{equation*}
a = \bigvee A \quad\text{and}\quad b = \bigvee B \quad\text{for } A,B \subseteq \compact[\alpha]{F}
\end{equation*}
with $|A|, |B| < \beta$. Then
\begin{equation*}
a \wedge b = \left(\bigvee A\right) \wedge \left(\bigvee B\right) = \bigvee_{a' \in A} \bigvee_{b' \in B} (a' \wedge b')\,.
\end{equation*}
Since $\beta$ is regular, this is a $\beta$-join. In particular $a \wedge b$ is $\beta$-compact. 

It remains to show $\compact[\alpha]{(\compact[\beta]{F})} = \compact[\alpha]{F}$ when $F$ is $\beta$-coherent. It is clear that any $\alpha$-compact element of $F$ is $\alpha$-compact in $\compact[\beta]{F}$. For the opposite inclusion, we take $a \in \compact[\alpha]{(\compact[\beta]{F})}$. Let $a \leq \bigvee B$ for some $B \subseteq F$. Since $F$ is $\beta$-coherent, any element $b \in B$ can be expressed as the join of $\beta$-compact elements, say of $C_b \subseteq \compact[\beta]{F}$. Then
\begin{equation*}
a \leq \bigvee_{b \in B} \bigvee C_b\,.
\end{equation*}
This is a join of $\beta$-compact elements. Since $a$ is $\alpha$-compact in $\compact[\beta]{F}$, there exists $C \subseteq \bigcup_{b \in B} C_b$ with $|C| < \alpha$ and
\begin{equation*}
a \leq \bigvee C \leq \bigvee B'\,,
\end{equation*}
where $B' \subseteq B$ is chosen such that for every $c \in C$ there exists $b \in B'$ with $c \in C_b$. This subset $B'$ can be chosen such that $|B'| < \alpha$.
\end{proof}

\subsection*{Frame of ideals}

Let $\alpha$ be a regular cardinal. An \emph{$\alpha$-ideal} in an $\alpha$-distributive lattice $L$ is a non-empty subset $I$ of $L$ that is closed under $\alpha$-joins, and
\begin{equation*}
a \in I \quad\text{and}\quad b \leq a \,\implies\, b \in I\,.
\end{equation*}

The set of $\alpha$-ideals of $L$ form a lattice, ordered by inclusion; we denote it by $\Idl[\alpha](L)$. The meet of a set of $\alpha$-ideals is their intersection. The join of a set $C \subseteq \Idl[\alpha](L)$ is the smallest $\alpha$-ideal containing their union and is explicitly given by
\begin{equation*}
\bigvee C = \set{a \in L}{a = \bigvee A \text{ for some }A \subseteq \bigcup_{J \in C} J \text{ with } |A| < \alpha}\,.
\end{equation*}
This is an $\alpha$-ideal, since $\alpha$-distributivity of $L$ implies that any $b \leq \bigvee A$ lies also in the right-hand side. 

As before we drop the cardinal when $\alpha=\aleph_0$. Thus an $\aleph_0$-ideal is called \emph{ideal}. 

The \emph{principal ideal} of $a \in L$ is
\begin{equation*}
\principal{a} \colonequals \set{b \in L}{b \leq a}\,.
\end{equation*}
A principal ideal is an $\alpha$-ideal for any cardinal $\alpha$. 

\begin{lemma} \label{idl-infinite-dist}
For an $\alpha$-distributive lattice $L$, the lattice of $\alpha$-ideals $\Idl[\alpha](L)$ satisfies the infinite-join distributive law. In particular, the $\alpha$-ideals form a frame.
\end{lemma}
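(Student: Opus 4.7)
The plan is to verify the infinite-join distributive law directly from the explicit description of joins given just above the lemma, since every other ingredient of a frame is already on hand. Fix an $\alpha$-ideal $J$ and a family $C\subseteq\Idl[\alpha](L)$ (of arbitrary cardinality, not just $<\alpha$). I want to prove
\begin{equation*}
J\wedge\bigvee C \;=\; \bigvee_{K\in C}\,(J\wedge K),
\end{equation*}
where $\wedge$ is intersection and $\bigvee$ is as in the formula displayed before the lemma.

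The inclusion $\supseteq$ is purely formal: for each $K\in C$ we have $J\wedge K\subseteq J$ and $J\wedge K\subseteq\bigvee C$, so the join on the right is contained in the intersection on the left. The reverse inclusion $\subseteq$ is the heart of the matter. Take $a\in J\cap\bigvee C$. By the explicit formula for the join, there is a subset $A\subseteq\bigcup_{K\in C}K$ with $|A|<\alpha$ and $a=\bigvee A$. For every $a'\in A$ we have $a'\leq a\in J$, so $a'\in J$ because $J$ is downward closed; and $a'\in K$ for some $K\in C$, so $a'\in J\cap K$. Thus $A\subseteq\bigcup_{K\in C}(J\cap K)$, and since $|A|<\alpha$ the explicit formula yields $a=\bigvee A\in\bigvee_{K\in C}(J\cap K)$, as required. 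The place that needs the $\alpha$-distributivity of $L$ is a bit subtle: one needs the right-hand side of the explicit join formula to be a legitimate $\alpha$-ideal, and that is exactly where $\alpha$-distributivity was used (in the sentence following the formula in the excerpt); the distributivity argument itself only uses downward closure of $J$ and the description of the join.

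For the second sentence, one only has to check that $\Idl[\alpha](L)$ is a complete lattice with a top and a bottom element, so that the infinite-join distributive law upgrades it to a frame. The top element is $L$ itself (which is an $\alpha$-ideal because $L$ is $\alpha$-distributive hence closed under $\alpha$-joins and downward closed trivially), and the bottom is the principal ideal $\principal{0}$, which equals $\{0\}$ because every $\alpha$-ideal is closed under the empty join. Arbitrary meets exist as intersections, and arbitrary joins are provided by the explicit formula; combined with the distributive law just proved, this shows $\Idl[\alpha](L)$ is a frame.

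The only place where one has to be slightly careful is the downward-closure step: one must note that $a'\leq a$ together with $a\in J$ puts $a'$ into $J$ without any further distributivity, and that the extraction of a small index set $A$ uses nothing about the size of $C$, only the size of $A$. That is why the argument gives the \emph{infinite}-join distributive law and not merely the $\alpha$-version that $L$ itself satisfies, and I do not foresee any other obstacle.
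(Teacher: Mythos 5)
Your proof is correct and follows essentially the same route as the paper: take $a$ in the intersection, write $a=\bigvee A$ with $|A|<\alpha$ and $A\subseteq\bigcup_{K\in C}K$ via the explicit join formula, use downward closure of $J$ to place $A$ inside $\bigcup_{K\in C}(J\cap K)$, and conclude $a\in\bigvee_{K\in C}(J\wedge K)$; the role of $\alpha$-distributivity (making the join formula produce an actual $\alpha$-ideal) is identified exactly as in the paper.
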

\begin{proof}
Suppose $I$ is an $\alpha$-ideal and $C \subseteq \Idl[\alpha](L)$. We always have
\begin{equation*}
\bigvee_{J \in C} (I \wedge J) \leq I \wedge \bigvee C\,.
\end{equation*}
We want to show that equality holds. We take $a \in I \wedge \bigvee C$. So $a \in \bigvee C$ and there exists a set $A \subseteq \bigcup_{J \in C} J$ with $|A| < \alpha$ such that $a = \bigvee A$. Since also $a \in I$ we have $A \subseteq I$. Now the infinite-join distributive law for sets yields
\begin{equation*}
A \subseteq I \cap \bigcup_{J \in C} J = \bigcup_{J \in C} (I \cap J)\,.
\end{equation*}
Thus $a = \bigvee A \in \bigvee_{J \in C} (I \wedge J)$. 
\end{proof}

\subsection*{Coherent frames as completions}

The following result and its proof are an extension of \cite[II.3.2]{Johnstone:1982} about coherent frames and a special case of Gabriel--Ulmer duality \cite[\S7]{Gabriel/Ulmer:1971} about locally presentable categories. In particular, we see that any $\alpha$-coherent frame $F$ arises as a completion of its sublattice $F^\alpha$.

\begin{theorem} \label{CohcatDistcatEquiv}
Let $\alpha$ be a regular cardinal. Then we have a pair of mutually quasi-inverse equivalences
\begin{equation*}
\begin{tikzcd}[column sep=huge]
\Cohcat[\alpha] \ar[r,shift left=0.5em,"{\compact[\alpha]{(-)}}"] \ar[r,phantom,"\equiv"] & \Distcat[\alpha]\,. \ar[l,shift left=0.5em,"{\Idl[\alpha]}"]
\end{tikzcd}
\end{equation*}
\end{theorem}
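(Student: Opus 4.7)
The plan is to verify both functors are well-defined on objects and morphisms, then exhibit mutually inverse natural isomorphisms $\eta_L \colon L \iso \compact[\alpha]{\Idl[\alpha](L)}$ given by $a\mapsto\principal{a}$ and $\varepsilon_F \colon \Idl[\alpha](\compact[\alpha]{F}) \iso F$ given by $I\mapsto\bigvee I$. The one genuinely delicate point will be the characterization of the $\alpha$-compact elements of $\Idl[\alpha](L)$; the remainder extends the classical argument \cite[II.3.2]{Johnstone:1982} for $\alpha=\aleph_0$ with only bookkeeping changes. Note that $\compact[\alpha]{F}$ being an object of $\Distcat[\alpha]$ is already built into the definition of an $\alpha$-coherent frame.

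To see $\Idl[\alpha](L)$ is $\alpha$-coherent when $L\in\Distcat[\alpha]$, note first that it is a frame by \cref{idl-infinite-dist}. I claim its $\alpha$-compact elements are precisely the principal ideals. That each $\principal{a}$ is $\alpha$-compact uses the explicit formula for joins in $\Idl[\alpha](L)$: if $\principal{a}\leq\bigvee C$, then $a=\bigvee A$ for some $A\subseteq\bigcup_{J\in C}J$ with $|A|<\alpha$, and choosing $J_b\in C$ with $b\in J_b$ for each $b\in A$ yields a subfamily of $C$ of size less than $\alpha$ whose join contains $\principal{a}$. Conversely, every $I\in\Idl[\alpha](L)$ equals $\bigvee_{a\in I}\principal{a}$, so an $\alpha$-compact $I$ is a join of fewer than $\alpha$ principal ideals; since $I$ is closed under $\alpha$-joins this join is itself principal, namely $\principal{\bigvee A}$ for some $A\subseteq I$ with $|A|<\alpha$. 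From this characterization, (C1) is the statement $I=\bigvee_{a\in I}\principal{a}$, (C2) holds because $L=\principal{1}$, and (C3) follows because $a\mapsto\principal{a}$ preserves finite meets and $\alpha$-joins. Consequently $\eta_L$ is a bijection onto $\compact[\alpha]{\Idl[\alpha](L)}$ and a morphism in $\Distcat[\alpha]$.

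For the counit $\varepsilon_F$, surjectivity is just axiom (C1). For injectivity, suppose $\bigvee I_1=\bigvee I_2$ and take $a\in I_1$. Since $a$ is $\alpha$-compact in $F$ and $a\leq\bigvee I_2$, there exists $B\subseteq I_2$ with $|B|<\alpha$ and $a\leq\bigvee B$; closure of $I_2$ under $\alpha$-joins yields $a\in I_2$, and by symmetry $I_1=I_2$. Preservation of arbitrary joins by $\varepsilon_F$ is immediate from the explicit join formula in $\Idl[\alpha]$, while preservation of finite meets uses the infinite-join distributive law in $F$ together with the observation that $a\in I_1$ and $b\in I_2$ forces $a\wedge b\in I_1\cap I_2$. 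On morphisms, an $\alpha$-coherent frame morphism restricts by definition to $\compact[\alpha]{(-)}$, while $f\colon L\to L'$ in $\Distcat[\alpha]$ is sent to $I\mapsto\set{b\in L'}{b\leq\bigvee f(A)\text{ for some }A\subseteq I\text{ with }|A|<\alpha}$; naturality of $\eta$ and $\varepsilon$ then follows because a morphism of $\alpha$-coherent frames is determined by its restriction to $\alpha$-compact elements.
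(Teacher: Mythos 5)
Your proposal is correct and follows essentially the same route as the paper: identify the $\alpha$-compact elements of $\Idl[\alpha](L)$ with the principal ideals, deduce $\alpha$-coherence of $\Idl[\alpha](L)$ via \cref{idl-infinite-dist}, and identify $F$ with $\Idl[\alpha](\compact[\alpha]{F})$. The only cosmetic differences are that you write this last isomorphism in the direction $I\mapsto\bigvee I$ (the paper uses its inverse $a\mapsto\set{b\in\compact[\alpha]{F}}{b\leq a}$) and that you spell out functoriality on morphisms and naturality, which the paper leaves implicit.
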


\begin{proof}
We begin by showing that $\compact[\alpha]{(-)}$ is well-defined. If $F$ is an $\alpha$-coherent frame, then $\compact[\alpha]{F}$ is an $\alpha$-distributive lattice by definition. For morphisms, we observe that a morphism of $\alpha$-coherent frames restricts to a morphism of the sublattices formed by the $\alpha$-compact elements. In particular, it preserves $\alpha$-joins.

Next we show that $\compact[\alpha]{(\Idl[\alpha](L))}$ equals the set of principal ideals of $L$. Let $L \in \Distcat[\alpha]$. Choose $a \in L$ and $C \subseteq \Idl[\alpha](L)$. Then
\begin{align*}
\principal{a} \leq \bigvee C &\iff{} a \in \bigvee C \\
&\iff{} a = \bigvee A \quad\text{for some } A \subseteq \bigcup_{I \in C} I \text{ with } |A| < \alpha \\
& \iff{} a \leq \bigvee C' \quad\text{for some } C' \subseteq C \text{ with } |C'| < \alpha\,.
\end{align*}
It follows that any principal ideal of $L$ is $\alpha$-compact in $\Idl[\alpha](L)$. 

For the reverse inclusion, take $I \in \compact[\alpha]{(\Idl[\alpha](L))}$. We can express $I$ as the join of principal ideals of its elements. Since $I$ is $\alpha$-compact there is a subset $A \subseteq I$ with $|A| < \alpha$, such that
\begin{equation*}
I = \bigvee_{a \in I} \principal{a} = \bigvee_{a \in A} \principal{a} = \principal{\left(\bigvee A\right)}\,.
\end{equation*}
The last equality holds, since $L$ is closed under $\alpha$-joins. 

We show that $\Idl[\alpha]$ is well-defined. By \cref{idl-infinite-dist} the lattice $\Idl[\alpha](L)$ is a frame. Since the $\alpha$-compact elements of $\Idl[\alpha](L)$ are precisely the principal ideals, they form a sublattice, which is closed under $\alpha$-joins, and every ideal can be expressed as a join of principal ideals.

Finally we show that $F \cong \Idl[\alpha](\compact[\alpha]{F})$. In fact, we show that the canonical map
\begin{equation*}
\varphi \colon F \to \Idl[\alpha](\compact[\alpha]{F}) \,,\quad a \mapsto \set{b \in \compact[\alpha]{F}}{b \leq a}
\end{equation*}
is an isomorphism of $\alpha$-coherent frames. Since $F$ is $\alpha$-coherent, every element can be expressed as a join of $\alpha$-compact elements. In particular, one has
\begin{equation*}
\bigvee \varphi(a) = a \quad\text{for all } a \in F\,.
\end{equation*}
Thus $\varphi$ is injective. If $I \in \Idl[\alpha](\compact[\alpha]{F})$, then
\begin{equation*}
\varphi\left(\bigvee I\right) = I\,,
\end{equation*}
and $\varphi$ is surjective. The map $\varphi$ respects the orderings of $F$ and $\Idl[\alpha](\compact[\alpha]{F})$. Finally, we observe that $\varphi$ maps $\alpha$-compact elements to $\alpha$-compact elements, since the $\alpha$-compact elements of $\Idl[\alpha](\compact[\alpha]{F})$ are precisely the principal ideals of $\compact[\alpha]{F}$.
\end{proof}

\begin{remark}
For any regular cardinal $\alpha$, let $\cat{Lat}_\alpha$ denote the category of complete lattices such that every element is a join of $\alpha$-compact elements, and let $\cat{Pos}_\alpha$ denote the category of posets which admit $\alpha$-joins. Then we have a pair of mutually quasi-inverse equivalences
\begin{equation*}
\begin{tikzcd}[column sep=huge]
\cat{Lat}_\alpha \ar[r,shift left=0.5em,"{\compact[\alpha]{(-)}}"] \ar[r,phantom,"\equiv"] & \cat{Pos}_\alpha \ar[l,shift left=0.5em,"{\Idl[\alpha]}"]
\end{tikzcd}
\end{equation*}
which restricts to the one in \cref{CohcatDistcatEquiv}. This is precisely the statement of \cite[Korollar~7.11]{Gabriel/Ulmer:1971} when specialized to categories of posets.
\end{remark}

\subsection*{Change of cardinals}

For regular cardinals $\alpha \leq \beta$ we obtain from \cref{CohcatDistcatEquiv} the following (non-commutative) diagram
\begin{equation*} \label{e:CoherentDistCat}
\begin{tikzcd}[column sep=huge]
\Cohcat[\beta] \ar[r,shift left=0.5em,"{\compact[\beta]{(-)}}"] \ar[r,phantom,"\equiv"] & \Distcat[\beta] \ar[l,shift left=0.5em,"{\Idl[\beta]}"] \ar[d,tail] \\
\Cohcat[\alpha] \ar[r,shift left=0.5em,"{\compact[\alpha]{(-)}}"] \ar[r,phantom,"\equiv"] \ar[u,tail] & \Distcat[\alpha] \ar[l,shift left=0.5em,"{\Idl[\alpha]}"] 
\end{tikzcd}
\end{equation*}
where the arrows $\rightarrowtail$ indicate forgetful functors. The vertical arrow on the left is well-defined; by \cref{coherentAscent} any $\alpha$-coherent frame is $\beta$-coherent and any morphism of $\alpha$-coherent frames is a morphism of $\beta$-coherent frames. We observe that
\begin{equation*}
\bigcup_{\alpha\text{ regular}}\Cohcat[\alpha]=\Framecat\qquad\text{and}\qquad
\bigcap_{\alpha\text{ regular}}\Distcat[\alpha]=\Framecat
\end{equation*}
and then both horizontal functors identify with the identity.

The following lemma explains the commutation rules in the above diagram.

\begin{lemma}
The composite of functors 
\begin{equation*} \label{eq:BetaCompletionDistAlpha}
\begin{tikzcd}
\Distcat[\alpha] \ar[r,"{\Idl[\alpha]}"] & \Cohcat[\alpha] \ar[r,tail] & \Cohcat[\beta] \ar[r,"{\compact[\beta]{(-)}}"] & \Distcat[\beta]
\end{tikzcd}
\end{equation*}
is left adjoint to the forgetful functor $\Distcat[\beta] \rightarrowtail \Distcat[\alpha]$. Similarly, the functor
\begin{equation*} \label{eq:CompletionDistAlpha}
\begin{tikzcd}
\Distcat[\alpha] \ar[r,"{\Idl[\alpha]}"] & \Cohcat[\alpha] \ar[r,tail] & \Framecat
\end{tikzcd}
\end{equation*}
is left adjoint to the forgetful functor $\Framecat \rightarrowtail \Distcat[\alpha]$. 
\end{lemma}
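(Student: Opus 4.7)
The strategy is to establish the second, simpler adjunction first and then deduce the first from it using \cref{CohcatDistcatEquiv}. For the second adjunction, given $L\in\Distcat[\alpha]$ and a frame $F$, I construct a natural bijection
\[
\Hom_{\Framecat}(\Idl[\alpha](L),F)\cong \Hom_{\Distcat[\alpha]}(L,F).
\]
The unit $\eta_L\colon L\to\Idl[\alpha](L)$, $a\mapsto\principal{a}$, is a morphism in $\Distcat[\alpha]$, yielding the backward direction by precomposition. For the forward direction, since every $I\in\Idl[\alpha](L)$ equals $\bigvee_{a\in I}\principal{a}$, a frame morphism out of $\Idl[\alpha](L)$ is determined by its values on principal ideals; we are therefore forced to set $\hat f(I)\colonequals \bigvee_{a\in I}f(a)$. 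The main verification is that $\hat f$ is a frame morphism: preservation of arbitrary joins follows from the explicit formula for joins in $\Idl[\alpha](L)$ stated before \cref{idl-infinite-dist} combined with $f$ preserving $\alpha$-joins, and preservation of binary meets follows via the chain
\[
\hat f(I)\wedge\hat f(J)=\bigvee_{a\in I,\,b\in J}f(a)\wedge f(b)=\bigvee_{a\in I,\,b\in J}f(a\wedge b)=\bigvee_{c\in I\cap J}f(c)=\hat f(I\wedge J),
\]
using the infinite-join distributive law of $F$ in the first equality and downward-closure of $I,J$ in the third.

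For the first adjunction, fix $L\in\Distcat[\alpha]$ and $M\in\Distcat[\beta]$. By \cref{coherentAscent} the frame $\Idl[\alpha](L)$ is $\beta$-coherent, and \cref{CohcatDistcatEquiv} therefore identifies it with $\Idl[\beta]\bigl(\compact[\beta]{\Idl[\alpha](L)}\bigr)$ and supplies a natural bijection
\[
\Hom_{\Distcat[\beta]}\bigl(\compact[\beta]{\Idl[\alpha](L)},\,M\bigr)\cong \Hom_{\Cohcat[\beta]}\bigl(\Idl[\alpha](L),\,\Idl[\beta](M)\bigr).
\]
A morphism on the right is a frame morphism sending $\beta$-compact elements to $\beta$-compact elements. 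Applying the second adjunction to $F=\Idl[\beta](M)$, such a morphism corresponds to an $\alpha$-distributive lattice map $f\colon L\to\Idl[\beta](M)$ whose extension $\hat f$ is $\beta$-coherent.

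I claim this last condition is equivalent to $f$ factoring through the sublattice $M=\compact[\beta]{\Idl[\beta](M)}$ of principal ideals. Indeed, principal ideals of $L$ are $\alpha$-compact and hence $\beta$-compact in $\Idl[\alpha](L)$, so $\beta$-coherence of $\hat f$ forces $f(a)=\hat f(\principal{a})$ to be $\beta$-compact in $\Idl[\beta](M)$, i.e.\@ a principal ideal. Conversely, if $f$ takes values in $M$, then by \cref{coherentAscent} every $\beta$-compact element of $\Idl[\alpha](L)$ is a $\beta$-join of principal ideals of $L$, and since frame morphisms preserve $\beta$-joins while the $\beta$-compacts of $\Idl[\beta](M)$ are closed under $\beta$-joins, $\hat f$ sends $\beta$-compacts to $\beta$-compacts. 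Composing all bijections gives the first adjunction, and naturality in both variables is inherited from that of the second. The only genuinely computational step is the meet-preservation of $\hat f$, which hinges on the infinite-join distributive law in $F$; the main conceptual pivot — translating ``$\hat f$ is $\beta$-coherent'' into ``$f$ lands in $M$'' — rests squarely on \cref{coherentAscent}.
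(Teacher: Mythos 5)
Your argument is correct, but it is organized differently from the paper's. The paper proves both adjunctions directly and in parallel by exhibiting unit and counit: the unit $L\to \compact[\beta]{(\Idl[\alpha](L))}$, $a\mapsto\principal{a}$, and the counit $\compact[\beta]{(\Idl[\alpha](M))}\to M$, $I\mapsto\bigvee I$, the only point singled out for verification being that $\bigvee I$ exists in $M$ because a $\beta$-compact $\alpha$-ideal is the join of fewer than $\beta$ principal ideals; the triangle identities are left as a routine check. You instead first establish the frame-level adjunction by a hom-set bijection, constructing the extension $\hat f(I)=\bigvee_{a\in I}f(a)$ and verifying join- and meet-preservation (the latter via the infinite-join distributive law), and then deduce the $\beta$-level adjunction formally from \cref{CohcatDistcatEquiv} together with \cref{coherentAscent}, translating ``$\hat f$ preserves $\beta$-compacts'' into ``$f$ lands in the principal ideals $M\cong\compact[\beta]{(\Idl[\beta](M))}$''. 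That translation is sound in both directions: principal ideals of $L$ are $\beta$-compact in $\Idl[\alpha](L)$, and conversely every $\beta$-compact element of $\Idl[\alpha](L)$ is a $\beta$-join of principal ideals, which $\hat f$ carries into the $\beta$-compacts of $\Idl[\beta](M)$ since these are closed under $\beta$-joins for regular $\beta$. What your route buys is that the computational content (meet-preservation of $\hat f$) is made explicit and the first adjunction is exposed as a formal consequence of the completion adjunction plus the already-proved equivalence; what the paper's route buys is brevity and symmetry, treating both statements at once with the same unit/counit formulas. One could add that naturality of your composite bijection in $L$ and $M$ deserves at least a sentence, but it does follow from naturality of each constituent identification, so there is no gap.
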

\begin{proof}
Let $L$ be an $\alpha$-distributive lattice and $M$ a $\beta$-distributive lattice. It is straightforward to check that the morphisms
\begin{equation*}
\begin{tikzcd}[row sep=0]
L \ar[r] & \compact[\beta]{(\Idl[\alpha](L))} \\
a \ar[r,mapsto] & \principal{a}
\end{tikzcd}
\quad\text{and}\quad
\begin{tikzcd}[row sep=0]
\compact[\beta]{(\Idl[\alpha](M))} \ar[r] & M \\
I \ar[r,mapsto] & \bigvee I
\end{tikzcd}
\end{equation*}
are unit and counit, respectively. We note that the second morphism is well-defined, since for $I \in \compact[\beta]{(\Idl[\alpha](M))}$ we have
\begin{equation*}
I = \bigvee_{a \in I} \principal{a} = \bigvee_{a \in A} \principal{a}
\end{equation*}
for some $A \subseteq I$ with $|A| < \beta$. Then $\bigvee I = \bigvee A$ exists in $M$. 

The unit and counit morphisms for the second claim are defined similarly.
\end{proof}

\subsection*{Spectral spaces}

Let $X$ be a topological space and $\alpha$ a cardinal. We say an open set $U$ is \emph{$\alpha$-compact}, if for any open cover $U\subseteq\bigcup_{U'\in \mathcal{U}} U'$ there exists $\mathcal{U}' \subseteq \mathcal{U}$ with $|\mathcal{U}'|<\alpha$ and $U\subseteq\bigcup_{U' \in \mathcal{U}'} U'$. When $\alpha = \aleph_0$, then this notion coincides with classical quasi-compactness.

A topological space is \emph{$\alpha$-spectral}, if it is sober and the $\alpha$-compact open sets form an $\alpha$-distributive sublattice that is a basis for the topology; that means:
\begin{enumerate}
\item[(SP1)] any open set is a union of $\alpha$-compact open sets,
\item[(SP2)] the space is $\alpha$-compact, and
\item[(SP3)] any $\alpha$-union and any finite intersection of $\alpha$-compact open sets is $\alpha$-compact.
\end{enumerate}
When $\alpha$ is a regular cardinal, then the closure under $\alpha$-unions always holds. 

We define a morphism between $\alpha$-spectral spaces as a continuous map for which the inverse image of an $\alpha$-compact open set is $\alpha$-compact; cf.\@ \cite{Hochster:1969}.

It is common to drop the cardinal $\alpha=\aleph_0$, calling a space \emph{spectral} if it is $\aleph_0$-spectral. 

\begin{proposition}
Let $\alpha$ be a regular cardinal. The adjunction between topological spaces and frames \eqref{eq:AdjointOmegaPt} restricts to a contravariant equivalence between the category of $\alpha$-spectral spaces and the category of spatial $\alpha$-coherent frames.
\end{proposition}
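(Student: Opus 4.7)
The plan is to leverage the existing Stone duality equivalence between sober topological spaces and spatial frames (recalled after \cref{eq:AdjointOmegaPt}), and then check that the additional structure on each side — being $\alpha$-spectral on the topological side, being $\alpha$-coherent on the frame side — matches up under the adjunction $\Omega \dashv \pt$, both on objects and on morphisms.

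The key preliminary observation, which I would state and prove as a one-line lemma, is the following: for any topological space $X$, an open subset $U \subseteq X$ is $\alpha$-compact in the sense defined just before the proposition if and only if $U$ is an $\alpha$-compact element of the frame $\Omega(X)$. This is essentially a tautology, since joins in $\Omega(X)$ are unions of open sets, and the two notions of $\alpha$-compactness are formally identical. In particular, the poset of $\alpha$-compact open sets of $X$ coincides with $\compact[\alpha]{\Omega(X)}$.

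With this identification in hand, I would proceed as follows. First, suppose $X$ is $\alpha$-spectral. Then $X$ is sober, so $\Omega(X)$ is spatial. Conditions (SP1), (SP2) and (SP3) translate term by term into conditions (C1), (C2) and (C3) for $\Omega(X)$, so $\Omega(X)$ is a spatial $\alpha$-coherent frame. Conversely, suppose $F$ is a spatial $\alpha$-coherent frame. By Stone duality $\pt(F)$ is sober and the unit $F \to \Omega(\pt(F))$ is an isomorphism of frames. Transporting along this isomorphism, the $\alpha$-compact elements of $F$ correspond to $\alpha$-compact open subsets of $\pt(F)$, and (C1)–(C3) become (SP1)–(SP3). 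Hence $\pt(F)$ is $\alpha$-spectral. For morphisms: a continuous map $f\colon X\to Y$ induces the frame morphism $\Omega(f) = f^{-1}\colon \Omega(Y)\to\Omega(X)$, and the requirement that $f^{-1}$ sends $\alpha$-compact open sets to $\alpha$-compact open sets matches, via the preliminary observation, the requirement that $\Omega(f)$ restrict to a map $\compact[\alpha]{\Omega(Y)}\to\compact[\alpha]{\Omega(X)}$ — that is, $\Omega(f)$ is a morphism of $\alpha$-coherent frames. The same correspondence works in the reverse direction for $\pt$.

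I do not expect any serious obstacle. The one point that needs minor care is the preliminary lemma identifying $\alpha$-compact open subsets with $\alpha$-compact elements of the frame, because that is where the quantification over \emph{all} open covers (versus joins of arbitrary subfamilies of $\Omega(X)$) must be reconciled; but since every subfamily of $\Omega(X)$ is itself a family of open sets whose union equals its join in the frame, there is no gap. Once this is established, the remainder of the proof is a routine restriction of the adjunction in \cref{eq:AdjointOmegaPt}, and the unit and counit of the restricted adjunction are inherited as isomorphisms from the Stone duality equivalence.
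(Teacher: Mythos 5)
Your proposal is correct and follows essentially the same route as the paper: the paper's (very terse) proof consists precisely of the observation that the $\alpha$-compact open sets of a sober space $X$ identify with the $\alpha$-compact elements of the spatial frame $\Omega(X)$, from which the restriction of Stone duality follows; you simply spell out the resulting term-by-term translation of (SP1)--(SP3) into (C1)--(C3) and the matching condition on morphisms, which the paper leaves implicit.
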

\begin{proof}
Let $X$ be a sober space and $\Omega(X)$ its corresponding spatial frame. It is obvious that the $\alpha$-compact open sets of $X$ identify with the $\alpha$-compact elements of $\Omega(X)$. Thus the claim holds. 
\end{proof}

\subsection*{Prime ideals}

Let $L$ be a lattice. An ideal $I\subseteq L$ is \emph{prime} if $1\not\in I$ and
\begin{equation*}
a \wedge b \in I \,\implies\, a \in I\text{ or } b \in I\,.
\end{equation*}
If $L$ is a distributive lattice, then the prime ideals are precisely the prime elements of $\Idl[\aleph_0](L)$. 

\subsection*{Hochster duality}

The correspondence between topological spaces and frames \eqref{eq:AdjointOmegaPt} identifies spectral spaces with coherent frames. Following Hochster \cite{Hochster:1969} we assign to a coherent frame $F$ its \emph{Hochster dual} frame given by
\begin{equation*}
F^\vee \colonequals \Idl[\aleph_0]((\compact[\aleph_0]{F})^\op)\,.
\end{equation*}
It follows from \cref{CohcatDistcatEquiv} that $F^\vee$ is coherent and $F^{\vee\vee}\cong F$. 

For a coherent frame $F$ any point is determined by its values on $\compact[\aleph_0]{F}$. That is, the assignment $x\mapsto x^{-1}(0)\cap \compact[\aleph_0]{F}$ gives a bijection between the set of points of $F$ and the set of prime ideals in $\compact[\aleph_0]{F}$. Furthermore, the complements of the prime ideals of $\compact[\aleph_0]{F}$ are precisely the prime ideals of $(\compact[\aleph_0]{F})^\op \cong \compact[\aleph_0]{(F^\vee)}$. Thus there is a canonical bijection of sets $\pt(F) \iso \pt(F^\vee)$.

This bijection identifies Thomason subsets of $\pt(F)$ with the open subsets of $\pt(F^\vee)$. A set is \emph{Thomason} if it of the form $\bigcup_i V_i$ where each $V_i$ is closed with quasi-compact complement.

\section{Refining the topology on the space of points}

In this section we consider for any frame its space of points and study the question when an injective morphisms of frames corresponds to a refinement of the topology. We use the specialization order on the set of points and need to assume that points are locally closed.

\subsection*{The specialization order}

For a frame $F$ we consider on the set of points $\pt(F)$ the following partial order:
\begin{align*}
x \leq y \,&\logeq \,\, x(a) \geq y(a)\text{ for all } a\in F\\
&\iff \, p_x \leq p_y \\
&\iff \, \cl(x) \supseteq \cl(y)\,,
\end{align*}
where $\cl(x)$ denotes the closure of the point $x$. In particular,
\begin{equation*}
\cl(x) = \set{y\in\pt(F)}{x\le y}\,.
\end{equation*}
When $\varphi\colon F\to G$ is a morphism of frames, then $\pt(\varphi)$ preserves the order. For $x\in\pt(G)$ set $y \colonequals \pt(\varphi)(x) = x\circ \varphi$. Then we have
\begin{equation*}
p_y = \bigvee (x \circ \varphi)^{-1}(0) = \bigvee \set{a \in F}{\varphi(a) \leq p_x}
\end{equation*}
and therefore $\varphi(p_y) \le p_x$. We say $x$ is \emph{$\varphi$-honest} if this is an equality.

\subsection*{Locally closed points}

A point $x$ of a space $X$ is called \emph{locally closed}, if there exists an open set $U$ and a closed set $V$ of $X$ such that $U \cap V = \{x\}$.\footnote{If $X$ is spectral, this means that $x$ is \emph{trop-beau} \cite{Balmer/Favi:2011}, \emph{visible} \cite{Stevenson:2014}, and \emph{weakly visible} \cite{Barthel/Heard/Sanders:2021} in the corresponding Hochster dual space.} By taking $V=\cl(x)$ we see that a point $x$ is locally closed if and only if there exists an open set $U$ of $X$ such that 
\begin{equation*}
U \cap \set{y\in X}{y\ge x} = \{x\}\,.
\end{equation*}
This shows that a point $x$ is locally closed if and only if there exists an open set $U$ in which $x$ is maximal.

\begin{lemma} \label{strongly-visible}
Let $\varphi \colon F \to G$ be a map of frames such that the induced map $\pt(\varphi)$ is injective. If every point of $F$ is locally closed, then every point of $G$ is locally closed.
\end{lemma}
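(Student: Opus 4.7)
The plan is to take an arbitrary point $x\in\pt(G)$ and exhibit an open neighborhood in which $x$ is maximal, using the characterization of local closedness recalled just before the lemma: a point is locally closed if and only if it is maximal in some open set. The natural candidate for this neighborhood comes from pulling back a suitable open set along the continuous map $\pt(\varphi)\colon \pt(G)\to\pt(F)$.

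More precisely, I would set $y\colonequals \pt(\varphi)(x)=x\circ\varphi$ and invoke the hypothesis to choose $a\in F$ such that $y$ is the unique element of $U(a)\cap\{z\in\pt(F)\mid z\ge y\}$. Then I would consider the open set $U(\varphi(a))=\pt(\varphi)^{-1}(U(a))$ of $\pt(G)$. Since $y(a)=x(\varphi(a))$, we have $x\in U(\varphi(a))$. Suppose $x'\in U(\varphi(a))$ satisfies $x'\ge x$, and set $y'\colonequals\pt(\varphi)(x')$. Because $\pt(\varphi)$ preserves the specialization order (as recorded in the paragraph on the specialization order above), $y'\ge y$; because $x'(\varphi(a))=1$, we have $y'\in U(a)$. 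Local closedness of $y$ forces $y'=y$, and injectivity of $\pt(\varphi)$ then forces $x'=x$.

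This shows $x$ is maximal in the open set $U(\varphi(a))$, so $x$ is locally closed. The whole argument is essentially a translation between the two characterizations of local closedness and uses only (i) order preservation by $\pt(\varphi)$, (ii) injectivity of $\pt(\varphi)$, and (iii) the identity $x(\varphi(a))=(x\circ\varphi)(a)$. I do not expect a genuine obstacle; the only mild subtlety is remembering that the open neighborhood witnessing local closedness of $y$ need not contain only $y$—it merely makes $y$ maximal—so the argument must be phrased in terms of maximality rather than singletons, but this is precisely the reformulation given in the paragraph preceding the lemma.
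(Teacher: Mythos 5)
Your proof is correct and follows essentially the same route as the paper: pull back an open set in which the image point is maximal along $\pt(\varphi)$, and use order-preservation plus injectivity to conclude that $x$ is maximal in the preimage. The only cosmetic difference is that you name the preimage explicitly as $U(\varphi(a))$ instead of writing it as $\pt(\varphi)^{-1}(U)$, which are the same open set.
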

\begin{proof}
Set $f\colonequals \pt(\varphi)$. Let $x\in\pt(G)$ and $U\subseteq \pt(F)$ open such that $f(x)$ is maximal in $U$. Then $x$ is maximal in $f^{-1}(U)$, since any $y>x$ is mapped to $f(y)> f(x)$. Thus $x$ is locally closed.
\end{proof}

\begin{lemma} \label{maximalEltsBijPoints}
Let $\varphi \colon F \to G$ be a map of spatial frames such that the induced map $\pt(\varphi)$ is injective. Then the pre-image of every maximal point of $F$ is $\varphi$-honest.
\end{lemma}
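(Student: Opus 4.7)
The plan is to argue by contradiction, using spatiality of $G$ together with maximality of $y\colonequals \pt(\varphi)(x)\in\pt(F)$, where $x\in\pt(G)$ is a point mapping to $y$. The discussion preceding the lemma already establishes $\varphi(p_y)\le p_x$, so the task is only to rule out strict inequality.

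First, assume $\varphi(p_y)<p_x$. Since $G$ is spatial, spatiality applied to $\varphi(p_y)$ and $p_x$ yields a point $z\in\pt(G)$ with $z(p_x)=1$ and $z(\varphi(p_y))=0$. Second, I would unpack the condition $z(\varphi(p_y))=0$. Writing $y'\colonequals \pt(\varphi)(z)=z\circ\varphi$, the equality $z(\varphi(p_y))=0$ means that for every $b\in F$ with $y(b)=0$ we have $y'(b)=0$; that is $y^{-1}(0)\subseteq (y')^{-1}(0)$, which after taking joins is equivalent to $p_y\le p_{y'}$, i.e.\ $y\le y'$ in the specialization order.

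Third, maximality of $y$ forces $y=y'$, so $\pt(\varphi)(z)=\pt(\varphi)(x)$. Injectivity of $\pt(\varphi)$ then gives $z=x$, which contradicts $z(p_x)=1$ against $x(p_x)=\bigvee_{b\in x^{-1}(0)} x(b)=0$. Hence $\varphi(p_y)=p_x$, meaning $x$ is $\varphi$-honest.

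The only non-routine step is the first: noticing that spatiality of $G$ is precisely what lets one detect failure of the equality $\varphi(p_y)=p_x$ by a witnessing point $z$, and that the defining property of such a $z$ translates via the prime-element dictionary into the specialization relation $\pt(\varphi)(z)\ge y$. Once that translation is made, maximality of $y$ collapses the situation to a single point, and injectivity of $\pt(\varphi)$ delivers the contradiction. Note that spatiality of $F$ is never used, and the lemma is vacuous for those maximal $y$ that lie outside the image of $\pt(\varphi)$.
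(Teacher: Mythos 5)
Your proof is correct and follows essentially the same route as the paper's: assume $\varphi(p_y)<p_x$, use spatiality of $G$ to produce a separating point, translate its defining properties into $p_y\le p_{y'}$ for the image point, and let maximality of $y$ plus injectivity of $\pt(\varphi)$ yield the contradiction. The only difference is cosmetic (you phrase the separation via point values $z(p_x)=1$, $z(\varphi(p_y))=0$ rather than via the prime element $p_{x'}$), and your observation that spatiality of $F$ is not needed matches the paper's argument.
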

\begin{proof}
Let $x \in \pt(G)$ such that $y \colonequals \pt(\varphi)(x)$ is maximal. Then $x$ is also maximal.

We assume $\varphi(p_y) < p_x$. Since $G$ is spatial, there exists $x'\in\pt (G)$ such that $\varphi(p_y) \leq p_{x'}\neq p_x$. Then for $y' \colonequals \pt(\varphi)(x')$ we have
\begin{equation*}
p_y\leq \bigvee \set{a \in F}{\varphi(a) \leq p_{x'}}=p_{y'}
\end{equation*}
and this is an equality since $y$ is maximal. This is a contradiction to the assumption that $\pt(\varphi)$ is injective, since $x\neq x'$. Thus we obtain $\varphi(p_y) = p_x$ and $x$ is $\varphi$-honest.
\end{proof}

\begin{proposition} \label{BijPoints} 
Let $\varphi \colon F \to G$ and $\psi \colon G\to H$ be injective maps of spatial frames. Suppose that every point of $F$ is locally closed. Then the maps induced on the points $\pt(\varphi)$ and $\pt(\psi)$ are bijective if and only if $\pt(\psi\circ\varphi)$ is bijective.
\end{proposition}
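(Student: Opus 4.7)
The $(\Leftarrow)$ direction is immediate: $\pt(\psi\circ\varphi)=\pt(\varphi)\circ\pt(\psi)$ is a composition of bijections. For $(\Rightarrow)$, write $g\colonequals\pt(\varphi)$, $h\colonequals\pt(\psi)$, and $f\colonequals g\circ h=\pt(\psi\circ\varphi)$. From $f$ being bijective one immediately reads off that $g$ is surjective and $h$ is injective, so the task reduces to proving $g$ injective; the remaining bijectivity of $h$ then follows from $h=g^{-1}\circ f$. The plan is to show that every fiber $g^{-1}(y)$ is a singleton by first treating maximal $y\in\pt(F)$, and then reducing the general locally closed case to the maximal one by restricting to an appropriate open sublocale.

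For maximal $y$, set $z\colonequals f^{-1}(y)\in\pt(H)$. Bijectivity of $f$ together with maximality of $y$ forces $z$ to be maximal too, so \cref{maximalEltsBijPoints} applied to $\psi\circ\varphi$ (whose induced map on points is injective) yields the honesty $(\psi\circ\varphi)(p_y)=p_z$; injectivity of $\psi$ at the frame level then propagates this to $p_{h(z)}=\varphi(p_y)$, so $x_0\colonequals h(z)$ is a $\varphi$-honest point of $g^{-1}(y)$. Now take any $x\in g^{-1}(y)$. From $\varphi(p_y)\le p_x$ we get $p_z=\psi(\varphi(p_y))\le\psi(p_x)$. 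The key step is to show that $p_z$ is the \emph{only} prime of $H$ above $\psi(p_x)$: any such prime $q$ gives a point $z_q\ge z$ in the specialization order, and continuity together with maximality of $y$ force $f(z_q)=y$; bijectivity of $f$ then delivers $z_q=z$, so $q=p_z$. Spatiality of $H$ now yields $\psi(p_x)=p_z=\psi(p_{x_0})$, and injectivity of $\psi$ gives $x=x_0$.

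For a general $y$, choose $a\in F$ witnessing local closedness, so that $y$ is maximal in $U(a)$, and restrict to the open sublocales cut out by $a$: the principal ideals $\principal{a}\subseteq F$, $\principal{\varphi(a)}\subseteq G$, and $\principal{(\psi\circ\varphi)(a)}\subseteq H$ are again spatial frames, their points are canonically identified with the open subsets $U(a)$, $U(\varphi(a))$, $U((\psi\circ\varphi)(a))$, the restricted frame maps $\varphi_a,\psi_a$ remain injective, and $\pt(\psi_a\circ\varphi_a)$ is just the restriction of $f$, hence bijective. In this restricted setup $y$ is genuinely maximal, so the previous paragraph produces a unique preimage $x_0'$ in $U(\varphi(a))$. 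Any $x\in g^{-1}(y)$ automatically satisfies $x(\varphi(a))=y(a)=1$, so $g^{-1}(y)\subseteq U(\varphi(a))$, and the bijection between points of $G$ lying in $U(\varphi(a))$ and points of $\principal{\varphi(a)}$ (via $p\mapsto p\wedge\varphi(a)$) identifies $x$ with $x_0'$. Hence $g^{-1}(y)=\{x_0'\}$, which is all we need.

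The main technical obstacle I anticipate is assembling the open-sublocale machinery in the last paragraph cleanly: one has to verify that the restricted frames are spatial, that their points correspond precisely to the advertised open subsets (so Lemma~\ref{maximalEltsBijPoints} applies in the restricted context), and that all the ambient data of the proposition — injective frame maps, bijective composite on points, local closedness of points in $F$ — really does transfer so that the maximal-case argument applies verbatim. These are standard facts about open sublocales of spatial frames, but they must be invoked carefully so that the reduction from ``locally closed'' to ``maximal'' is watertight.
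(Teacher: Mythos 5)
Your proof is correct and follows essentially the same route as the paper's: both arguments reduce the general case to a maximal point via restriction to the open sublocale determined by a witness of local closedness, and both then settle the maximal case by invoking \cref{maximalEltsBijPoints} together with spatiality (the paper restricts to $F'=\set{a'\in F}{U(a')\subseteq U}$, which for a spatial $F$ is exactly your $\principal{a}$). Your write-up of the maximal case is a bit more explicit — isolating $p_z$ as the unique prime above $\psi(p_x)$ and then using that, in a spatial frame, every element below $1$ is the meet of the primes above it — but this is just spelling out the same step the paper compresses into ``since $x$ is maximal and $\psi$ injective''.
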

\begin{proof}
One direction is clear. Thus assume that $\pt(\psi\circ\varphi)$ is a bijection. We wish to show that $\pt(\varphi)$ is bijective. We pick $z\in\pt(F)$ and need to show that $\pt(\varphi)^{-1}(z)$ contains only one element. Let $U$ be an open subset of $\pt(F)$ such that 
\begin{equation*}
U \cap \set{y\in\pt(F)}{y\ge z}=\{z\}\,.
\end{equation*}
Set $V\colonequals\pt(\varphi)^{-1}(U)$ and $W\colonequals\pt(\psi)^{-1}(V)$. We consider the frame
\begin{equation*}
F' \colonequals \set{a\in F}{U(a)\subseteq U}
\end{equation*}
with $\pt(F')=U$. Analogously one defines
\begin{equation*}
G' \colonequals \set{a\in G}{U(a)\subseteq V} \quad\text{and}\quad H' \colonequals \set{a\in H}{U(a)\subseteq W}\,.
\end{equation*}
Then $\varphi$ and $\psi$ restrict to injective maps of spatial frames $F' \to G'$ and $G'\to H'$. Note that $z$ is a maximal point of $F'$. By assumption $\pt(\psi \circ \varphi)$ is bijective and we can pick $x\in\pt(H')$ and $y\in\pt(G')$ such that $\pt(\psi)(x)=y$ and $\pt(\varphi)(y)=z$. We apply \cref{maximalEltsBijPoints} and have $(\psi\circ\varphi)(p_z)=p_x$. On the other hand, $\varphi(p_z)\leq p_y$, and this is also an equality, since $\psi$ is injective and
\begin{equation*}
\psi(\varphi(p_z))\le \psi(p_y)\le p_x\,.
\end{equation*}
Let $y'\in\pt(G')$ with $\pt(\varphi)(y')=z$. Then we have $p_y= \varphi(p_z)\leq p_{y'}$, and therefore $p_x= \psi(p_y)\le\psi(p_{y'})$. This implies $y'=y$ since $x$ is maximal and $\psi$ injective. Thus $\pt(\varphi)^{-1}(z)$ contains only one element, and we conclude that $\pt(\varphi)$ is bijective. Clearly, then also $\pt(\psi)$ is bijective.
\end{proof}

\subsection*{Chain conditions}

We provide a useful criterion such that all points of a frame are locally closed. Let $F$ be an $\alpha$-coherent frame. If $(\compact[\alpha]{F},\leq)$ satisfies the descending (ascending) chain condition, then the meet (join) of any subset $A$ of $\compact[\alpha]{F}$ exists and coincides with the meet (join) of a finite subset of $A$. Thus we see that $(\compact[\alpha]{F},\leq)$ satisfies the ascending chain condition if and only if $(F,\leq)$ does. The same need not hold for the descending chain condition; since arbitrary meets exist in $\compact[\alpha]{F}$ so do arbitrary joins, but the latter need not coincide with those in $F$.

\begin{lemma} \label{pointsClosedDownwardsOpen}
Let $F$ be an $\alpha$-coherent frame such that $\compact[\alpha]{F}$ satisfies the descending chain condition. Then for any point $x \in \pt(F)$ the set
\begin{equation*}
\set{y \in \pt(F)}{x \geq y}
\end{equation*}
is open. 
\end{lemma}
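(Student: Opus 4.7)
The plan is to show that $\{y \in \pt(F) : x \geq y\} = \{y : p_y \leq p_x\}$ coincides with a basic open set $U(a)$ for a well-chosen $\alpha$-compact element $a$, which by definition of the topology on $\pt(F)$ will establish openness. Recalling that $U(a) = \{y : a \not\leq p_y\}$, I want $a$ satisfying $a \not\leq p_y \iff p_y \leq p_x$ for every $y \in \pt(F)$.

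The construction I propose is $a \colonequals \bigwedge\{c \in \compact[\alpha]{F} : c \not\leq p_x\}$. The descending chain condition on $\compact[\alpha]{F}$, as noted in the paragraph preceding the lemma, ensures that this meet exists in $\compact[\alpha]{F}$ and in fact coincides with a finite meet $c_1 \wedge \cdots \wedge c_n$ of elements of the same set; in particular $a \in \compact[\alpha]{F}$ by (C3). The indexing set is nonempty because $1 \in \compact[\alpha]{F}$ by (C2) and $1 \not\leq p_x$ (since $p_x \neq 1$). Since $p_x$ is prime and each $c_i \not\leq p_x$, primality applied to the finite meet forces $a \not\leq p_x$.

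With $a$ in hand, verification of the identification $\{y : p_y \leq p_x\} = U(a)$ is routine. If $p_y \leq p_x$, then $a \leq p_y$ would give $a \leq p_x$, contradicting $a \not\leq p_x$; hence $a \not\leq p_y$ and $y \in U(a)$. Conversely, if $p_y \not\leq p_x$, then by $\alpha$-coherence $p_y = \bigvee\{c \in \compact[\alpha]{F} : c \leq p_y\}$, and some $c_0$ in this join must satisfy $c_0 \not\leq p_x$ (otherwise the whole join would be $\leq p_x$), whence $a \leq c_0 \leq p_y$ and $y \notin U(a)$.

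The hard part is the reduction of the possibly infinite meet defining $a$ to a finite one, and this is precisely where the DCC hypothesis is indispensable: without it, $a$ still makes sense in $F$, but the primality of $p_x$—being a property only of finite meets—would not suffice to conclude $a \not\leq p_x$, and we would moreover lose any guarantee that $a$ is $\alpha$-compact and therefore index a basic open.
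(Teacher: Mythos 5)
Your proof is correct and is essentially the paper's own argument: the element $a$ you build is exactly the paper's $a=\bigwedge\bigl(x^{-1}(1)\cap \compact[\alpha]{F}\bigr)$ (since $x(c)=1$ iff $c\not\leq p_x$), the descending chain condition is used in the same way to make $a$ a finite meet and hence $\alpha$-compact, and the identification of $U(a)$ with the down-set of $x$ is the same verification, merely phrased via the prime elements $p_x,p_y$ instead of the values of the point maps.
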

\begin{proof}
Let $A \colonequals x^{-1}(1) \cap \compact[\alpha]{F}$ and $a \colonequals \bigwedge A$. Since $\compact[\alpha]{F}$ satisfies the descending chain condition, any meet of elements in $\compact[\alpha]{F}$ is finite; in particular $a \in \compact[\alpha]{F}$. We show now that $U(a) = \set{y \in \pt(F)}{x \geq y}$. 

For $y \leq x$ we have $y(a) \geq x(a) = 1$. That is $y(a) = 1$ and $y \in U(a)$. 

Let $y \in U(a)$ and $b \in \compact[\alpha]{F}$ with $x(b) = 1$. Then $a \leq b$ and thus $1 = y(a) \leq y(b)$ and $y(b) = 1$. That means
\begin{equation*}
x(b) \leq y(b) \quad\text{for all } b \in \compact[\alpha]{F}\,.
\end{equation*}
Since $F$ is $\alpha$-coherent, the points are completely determined by their value on the $\alpha$-compact elements. So $x \geq y$. 
\end{proof}

\subsection*{Dimension}

For a frame $F$ the partial order on $\pt(F)$ suggests the definition of a dimension. This goes back to Gulliksen \cite{Gulliksen:1973} and will not be used in the sequel; but it is the right concept for proving results for tensor triangulated categories, as for example in \cite{Balmer:2007}.

Let $(X,\le)$ be a poset satisfying the descending chain condition. We write $\Ord$ for the class of ordinals. Each set of ordinals has a supremum and we set $\sup\varnothing=0$. The \emph{height} of $x\in X$ is by definition
\begin{equation*}
\het(x) \colonequals \sup\set{\het(y)+1}{y<x}\,.
\end{equation*}
It is clear from the definition that
\begin{enumerate}
\item if $x \leq y$, then $\het(x) \leq \het(y)$, and
\item if $x \leq y$ and $\het(x) = \het(y)$, then $x = y$.
\end{enumerate}

Given a frame $F$ such that $(\pt(F),\le)$ satisfies the descending chain condition, one defines for $a\in F$ the \emph{dimension}
\begin{equation*}
\dim(a) \colonequals \sup_{x\in U(a)}\het(x)\,.
\end{equation*}
This yields an order preserving map $F\to\Ord$. For example, this gives the usual dimension of a noetherian spectral space $X$ when applied to $F=\Omega(X)^\vee$.

\section{The frame of localizing tensor ideals}

For a tensor triangulated category we study the frame of radical localizing tensor ideals. We assume that the category admits $\alpha$-coproducts and consider $\alpha$-localizing tensor ideals. This frame yields a universal notion of support, following the approach of \cite{Balmer:2005} and \cite{Kock/Pitsch:2017}.

\subsection*{Localizing tensor ideals}

Fix a tensor triangulated category $(\ccat{K},\otimes,\one)$ and a regular cardinal $\alpha$. An \emph{$\alpha$-coproduct} in $\ccat{K}$ is a coproduct of fewer than $\alpha$ objects. We assume that $\alpha$-coproducts exist in $\ccat{K}$. By this we also mean that these coproducts are preserved by the tensor product.

A triangulated subcategory of $\ccat{K}$ is \emph{$\alpha$-localizing}, if it is closed under $\alpha$-coproducts and direct summands. The latter condition is for free when $\alpha>\aleph_0$. A triangulated subcategory $\ccat{I}\subseteq\ccat{K}$ is a \emph{tensor ideal} if $X\in\ccat{I}$ and $Y\in \ccat{K}$ implies $X\otimes Y\in\ccat{I}$. For any class of objects $\ccat X\subseteq\ccat{K}$ we denote by $\loc[\alpha](\ccat{X})$ the smallest $\alpha$-localizing tensor ideal of $\ccat{K}$ containing $\ccat{X}$. We write $\Loc[\alpha](\ccat{K})$ for the collection of $\alpha$-localizing tensor ideals of $\ccat{K}$. When $\Loc[\alpha](\ccat{K})$ is a set, then it is a lattice where the meet of a subset of $\Loc[\alpha](\ccat{K})$ is their intersection, and the join the smallest $\alpha$-localizing tensor ideal containing their union.

When $\ccat{K}$ has arbitrary coproducts, then a subcategory is \emph{localizing} if it is $\alpha$-localizing for every cardinal $\alpha$. For $\ccat X\subseteq\ccat{K}$ we set 
\begin{equation*}
\loc(\ccat{X}) \colonequals \bigcup_{\alpha\text{ regular}}\loc[\alpha](\ccat{X})
\end{equation*}
and write $\Loc(\ccat{K})$ for the collection of localizing tensor ideals of $\ccat{K}$.

\subsection*{Radical tensor ideals}

A tensor ideal $\ccat{I} \subseteq\ccat{K}$ is \emph{radical} if
\begin{equation*}
X^{\otimes n} \in \ccat{I} \text{ for some positive integer } n \,\implies\, X \in \ccat{I}\,.
\end{equation*}
The \emph{radical closure} of a tensor ideal $\ccat{I} \subseteq\ccat{K}$ is the smallest radical tensor ideal containing $\ccat{I}$. This is well-defined, since the above property is preserved under intersection; that is the radical closure of $\ccat{I}$ is the intersection of all radical tensor ideals containing $\ccat{I}$.

Given $\ccat X\subseteq\ccat{K}$ we write $\rad[\alpha](\ccat X)$ for the smallest radical $\alpha$-localizing tensor ideal of $\ccat{K}$ containing $\ccat X$. A radical ideal of the form $\rad[\alpha](X)$ for some $X\in\ccat{K}$ is called \emph{principal}. We denote by $\Rad[\alpha](\ccat{K})$ the collection of radical $\alpha$-localizing tensor ideals of $\ccat{K}$.

When $\alpha = \aleph_0$, then the radical closure of an
$\alpha$-localizing tensor ideal $\ccat{I}$ is explicitly given by
\begin{equation*}
\rad[\alpha](\ccat{I})=\set{X \in \ccat{K}}{X^{\otimes n} \in \ccat{I} \text{ for some integer } n \geq 1}\,.
\end{equation*}
This need not hold for regular cardinals $\alpha > \aleph_0$; see \cite[Example~4.1.6]{Balchin/Stevenson:2021}.

When $\ccat{K}$ has arbitrary coproducts, then we set for $\ccat X\subseteq\ccat{K}$
\begin{equation*}
\rad(\ccat{X}) \colonequals \bigcup_{\alpha\text{ regular}} \rad[\alpha](\ccat{X})
\end{equation*}
and write $\Rad(\ccat{K})$ for the collection of radical localizing tensor ideals of $\ccat{K}$.

\begin{lemma}\label{le:Rad-set}
The following conditions are equivalent.
\begin{enumerate}
\item $\Rad[\alpha](\ccat{K})$ is a set.
\item $\set{\rad[\alpha](X)}{X \in \ccat{K}}$ is a set.
\end{enumerate}
In that case every ideal in $\Rad[\alpha](\ccat{K})$ is generated by a set of objects. The same holds when replacing $\Rad[\alpha]$ by $\Rad$ and $\rad[\alpha]$ by $\rad$, respectively.
\end{lemma}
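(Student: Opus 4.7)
The implication (1) $\Rightarrow$ (2) is immediate, since $\set{\rad[\alpha](X)}{X \in \ccat{K}}$ is a subclass of $\Rad[\alpha](\ccat{K})$. The interesting direction is (2) $\Rightarrow$ (1), and the key point is that any radical $\alpha$-localizing tensor ideal is determined by the collection of principal radical $\alpha$-localizing tensor ideals it contains.

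To make this precise, I will assume $S \colonequals \set{\rad[\alpha](X)}{X \in \ccat{K}}$ is a set and consider the map
\[
\Phi \colon \Rad[\alpha](\ccat{K}) \lto \mathcal{P}(S) \,, \quad \ccat{I} \longmapsto \set{s \in S}{s \subseteq \ccat{I}}\,.
\]
To see $\Phi$ is injective, suppose $\Phi(\ccat{I}) = \Phi(\ccat{J})$ and pick $X \in \ccat{I}$. Then $\rad[\alpha](X)$ belongs to $\Phi(\ccat{I}) = \Phi(\ccat{J})$, so $X \in \rad[\alpha](X) \subseteq \ccat{J}$; by symmetry $\ccat{I} = \ccat{J}$. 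Since $\mathcal{P}(S)$ is a set and $\Phi$ is injective, $\Rad[\alpha](\ccat{K})$ is a set.

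For the final assertion, let $\ccat{I} \in \Rad[\alpha](\ccat{K})$ and for each $s \in \Phi(\ccat{I})$ choose an object $X_s \in \ccat{K}$ with $\rad[\alpha](X_s) = s$. Then $\ccat{G} \colonequals \set{X_s}{s \in \Phi(\ccat{I})}$ is a set, and the claim is that $\ccat{I} = \rad[\alpha](\ccat{G})$: the inclusion $\supseteq$ is immediate from $\ccat{G} \subseteq \ccat{I}$, while for $\subseteq$ any $Y \in \ccat{I}$ satisfies $Y \in \rad[\alpha](Y) = \rad[\alpha](X_{\rad[\alpha](Y)}) \subseteq \rad[\alpha](\ccat{G})$. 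The identical argument, replacing $\rad[\alpha]$ by $\rad$ throughout, handles the version for $\Rad$. I do not anticipate any real difficulty; the only point to be careful about is the set-versus-class distinction, which is neatly resolved by the explicit injection $\Phi$ into a power set.
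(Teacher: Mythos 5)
Your proof is correct and follows essentially the same route as the paper: both arguments rest on the fact that any ideal in $\Rad[\alpha](\ccat{K})$ is the join of the principal radical ideals $\rad[\alpha](X)$ of its elements, hence is generated by a set of chosen representatives and is determined by a subset of $\set{\rad[\alpha](X)}{X \in \ccat{K}}$. Your explicit injection $\Phi$ into the power set is just a slightly more formal packaging of the paper's observation that each ideal is ``determined by a subset'' of that set.
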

\begin{proof}
If $\set{\rad[\alpha](X)}{X \in \ccat{K}}$ is a set, then for any $\ccat{I}\in\Rad[\alpha](\ccat{K})$ we have 
\begin{equation*}
\ccat{I}=\bigvee_{X\in\ccat{I}}\rad[\alpha](X)=\bigvee_{X\in\ccat I_0}\rad[\alpha](X)=\rad[\alpha](\ccat{I}_0)
\end{equation*}
for some subset $\ccat{I}_0\subseteq\ccat{I}$. Thus $\ccat{I}$ is generated by a set of objects and determined by a subset of $\set{\rad[\alpha](X)}{X \in \ccat{K}}$. It follows that $\Rad[\alpha](\ccat{K})$ is a set. The reverse direction is clear. The proof for $\Rad$ and $\rad$ works analogously.
\end{proof}

\begin{lemma} \label{RadAlphaFrame}
When $\Rad[\alpha](\ccat{K})$ is a set, then the radical $\alpha$-localizing tensor ideals $\Rad[\alpha](\ccat{K})$ form a frame. The same holds for $\Rad(\ccat{K})$.
\end{lemma}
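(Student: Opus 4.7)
The plan is to verify that $\Rad[\alpha](\ccat{K})$ is a complete lattice satisfying the infinite-join distributive law. Completeness is nearly automatic: each defining closure property of a radical $\alpha$-localizing tensor ideal is preserved under arbitrary intersections, so meets exist, and by \cref{le:Rad-set} the join $\bigvee_i\ccat{J}_i$ can be realized as $\rad[\alpha](\bigcup_i\ccat{J}_i)$. The greatest element is $\ccat{K}$ and the least is the zero ideal.

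The heart of the argument is to establish, for $\ccat{I}\in\Rad[\alpha](\ccat{K})$ and any family $\{\ccat{J}_i\}\subseteq\Rad[\alpha](\ccat{K})$, the identity
\[
\ccat{I}\cap\rad[\alpha]\Bigl(\bigcup_i\ccat{J}_i\Bigr)\;=\;\rad[\alpha]\Bigl(\bigcup_i(\ccat{I}\cap\ccat{J}_i)\Bigr).
\]
The inclusion $\supseteq$ is immediate. For $\subseteq$, given $Z$ in the left-hand side, I would introduce the subcategory
\[
\ccat{L}_Z\colonequals\{Y\in\ccat{K}\mid Z\otimes Y\in \ccat{R}\},\qquad\text{where}\ \ccat{R}\colonequals\rad[\alpha]\Bigl(\bigcup_i(\ccat{I}\cap\ccat{J}_i)\Bigr),
\]
and show that $\ccat{L}_Z\in\Rad[\alpha](\ccat{K})$. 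Granted this, each $\ccat{J}_i$ is contained in $\ccat{L}_Z$: for $Y\in\ccat{J}_i$ the product $Z\otimes Y$ belongs to both $\ccat{I}$ (since $Z\in\ccat{I}$) and $\ccat{J}_i$, hence to $\ccat{R}$. Consequently $\rad[\alpha](\bigcup_i\ccat{J}_i)\subseteq\ccat{L}_Z$, so $Z\in\ccat{L}_Z$; this says $Z^{\otimes 2}\in\ccat{R}$, and radicality of $\ccat{R}$ then yields $Z\in\ccat{R}$.

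The delicate point is verifying that $\ccat{L}_Z$ is itself radical; closure under triangles, $\alpha$-coproducts, direct summands, and tensoring with arbitrary objects of $\ccat{K}$ are straightforward because $Z\otimes-$ is triangulated and commutes with $\alpha$-coproducts, while $\ccat{R}$ is a tensor ideal. For radicality, suppose $Y^{\otimes n}\in\ccat{L}_Z$, so $Z\otimes Y^{\otimes n}\in\ccat{R}$. Since $\ccat{R}$ is a tensor ideal I can tensor with $Z^{\otimes(n-1)}$ to deduce $(Z\otimes Y)^{\otimes n}=Z^{\otimes n}\otimes Y^{\otimes n}\in\ccat{R}$; radicality of $\ccat{R}$ then gives $Z\otimes Y\in\ccat{R}$, i.e.\@ $Y\in\ccat{L}_Z$. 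I expect this radicality check to be the main obstacle, as the remaining closure properties are routine bookkeeping for the functor $Z\otimes-$. The claim for $\Rad(\ccat{K})$ follows by the same argument upon replacing $\alpha$-coproducts by arbitrary coproducts throughout.
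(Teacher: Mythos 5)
Your proof is correct and follows essentially the same route as the paper: the paper also reduces to the infinite-join distributive law and, for $X\in\ccat{I}\wedge\bigvee A$, introduces the auxiliary ideal $\ccat{C}_X=\set{Y\in\bigvee A}{X\otimes Y\in\bigvee_{\ccat{J}\in A}(\ccat{I}\wedge\ccat{J})}$, which is your $\ccat{L}_Z$ up to restricting to $\bigvee A$, and concludes by radicality of the right-hand side exactly as you do. The only difference is that the paper declares the verification that this auxiliary subcategory is a radical $\alpha$-localizing tensor ideal to be straightforward, whereas you spell out the radicality check via $(Z\otimes Y)^{\otimes n}\cong Z^{\otimes n}\otimes Y^{\otimes n}$ (using symmetry of the tensor), which is a welcome elaboration rather than a deviation.
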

\begin{proof}
It is enough to show that the infinite-join distributive law holds, that is
\begin{equation*}
\ccat{I} \wedge \bigvee A = \bigvee_{\ccat{J} \in A} (\ccat{I} \wedge \ccat{J}) \quad\text{for any } \ccat{I} \in \Rad[\alpha](\ccat{K}) \text{ and } A \subseteq \Rad[\alpha](\ccat{K})\,.
\end{equation*}
The inclusion $\supseteq$ is clear. For $\subseteq$, we take $X \in \ccat{I} \wedge \bigvee A$ and define
\begin{equation*}
\ccat{C}_X \colonequals \set{Y \in \bigvee A}{X \otimes Y \in \bigvee_{\ccat{J} \in A} (\ccat{I} \wedge \ccat{J})}\,.
\end{equation*}
It is straightforward to check that $\ccat{C}_X$ is a radical $\alpha$-localizing tensor ideal containing any $\ccat{J} \in A$. In particular, we have $\bigvee A \leq \ccat{C}_X$ and $X \in \ccat{C}_X$. The claim follows from the fact that $\bigvee_{\ccat{J} \in A} (\ccat{I} \wedge \ccat{J})$ is radical. 

The proof for $\Rad(\ccat{K})$ is the same.
\end{proof}

In the previous lemma the tensor product is essential. In general, distributivity fails in the lattice of $\alpha$-localizing subcategories.

\begin{example}
Consider the path algebra of the quiver $\circ\to\circ$ over a fixed field. We denote by $P_0$ and $P_1$ the indecomposable projective objects which are related by a monomorphism $P_1\to P_0$. The thick subcategories of the category of perfect complexes form a lattice which has the following Hasse diagram.
\begin{equation*}
\begin{tikzcd}[column sep = small,row sep=small]
&\circ\arrow[dash,ld]\arrow[dash,d]\arrow[dash,rd] \\
\circ\arrow[dash,rd] &\circ\arrow[dash,d]&\circ\arrow[dash,ld] \\
&\circ 
\end{tikzcd}
\end{equation*}
Thus the category has precisely three proper thick subcategories which are pairwise incomparable; they are generated by $P_0$, $P_1$, and the two term complex $P_1\to P_0$. Clearly, this lattice is not distributive.
\end{example}

\begin{lemma}\label{le:coproduct-join}
For a set $\ccat{X} \subseteq \ccat{K}$ with $|\ccat{X}| < \alpha$ we have
\begin{equation*}
\rad[\alpha](\coprod_{X \in \ccat{X}} X)=\bigvee_{X \in \ccat{X}}\rad[\alpha](X)\,.
\end{equation*}
\end{lemma}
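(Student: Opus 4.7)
The statement splits into two inclusions, both of which should follow from unwinding definitions and using the basic closure properties built into $\alpha$-localizing tensor ideals and radicals.

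For $\supseteq$, the plan is to observe that each $X\in\ccat{X}$ is a direct summand of the coproduct $\coprod_{X\in\ccat{X}}X$, because $\ccat{K}$ is a triangulated category with $\alpha$-coproducts and $|\ccat{X}|<\alpha$. Since $\rad[\alpha](\coprod_{X\in\ccat{X}}X)$ is an $\alpha$-localizing tensor ideal, it is closed under direct summands (automatically so when $\alpha>\aleph_0$, and by definition when $\alpha=\aleph_0$). Hence each $X$ belongs to $\rad[\alpha](\coprod_{X\in\ccat{X}}X)$, so $\rad[\alpha](X)\subseteq \rad[\alpha](\coprod_{X\in\ccat{X}}X)$ for every $X\in\ccat{X}$, and taking the join over $X\in\ccat{X}$ yields the inclusion.

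For $\subseteq$, the idea is to verify that the right-hand side already does the job that defines $\rad[\alpha](\coprod_{X\in\ccat{X}}X)$. The join $\bigvee_{X\in\ccat{X}}\rad[\alpha](X)$, taken in the frame $\Rad[\alpha](\ccat{K})$ from \cref{RadAlphaFrame}, is a radical $\alpha$-localizing tensor ideal containing each $X\in\ccat{X}$. Since $|\ccat{X}|<\alpha$, closure under $\alpha$-coproducts forces $\coprod_{X\in\ccat{X}}X$ into this join. By minimality of $\rad[\alpha](\coprod_{X\in\ccat{X}}X)$ among radical $\alpha$-localizing tensor ideals containing the coproduct, we conclude $\rad[\alpha](\coprod_{X\in\ccat{X}}X)\subseteq \bigvee_{X\in\ccat{X}}\rad[\alpha](X)$.

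The whole argument is essentially bookkeeping, so I do not expect any real obstacle; the only subtle point worth spelling out is that when $\alpha=\aleph_0$ one must invoke the direct-summand clause in the definition of localizing subcategory to conclude that summands of the coproduct lie in $\rad[\alpha](\coprod_{X\in\ccat{X}}X)$, whereas for $\alpha>\aleph_0$ this is automatic.
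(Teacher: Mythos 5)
Your proposal is correct and follows essentially the same route as the paper: the inclusion $\supseteq$ via direct-summand closure of $\rad[\alpha](\coprod_{X\in\ccat{X}}X)$, and the inclusion $\subseteq$ via closure of the join under $\alpha$-coproducts (using $|\ccat{X}|<\alpha$) together with minimality of $\rad[\alpha](-)$. Your extra remark distinguishing the $\alpha=\aleph_0$ case, where the direct-summand clause in the definition must be invoked, is a correct and harmless elaboration of what the paper leaves implicit.
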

\begin{proof}
The tensor ideal $\rad[\alpha](\coprod_{X \in \ccat{X}} X)$ is closed under direct summands. Thus $\ccat{X} \subseteq \rad[\alpha](\coprod_{X \in \ccat{X}} X)$ and the inclusion $\supseteq$ follows. On the other hand, $\coprod_{X \in \ccat{X}} X$ belongs to $\bigvee_{X \in \ccat{X}}\rad[\alpha](X)$ since $|\ccat{X}|<\alpha$, and this implies the inclusion $\subseteq$.
\end{proof}

\begin{lemma} \label{AlphaBuilding}
Let $\alpha$ be a regular cardinal, $\ccat{X} \subseteq \ccat{K}$ and $X \in \rad[\alpha](\ccat{X})$. Then there exists $\ccat{X}' \subseteq \ccat{X}$ with $|\ccat{X}'| < \alpha$ and $X \in \rad[\alpha](\ccat{X}')$.
\end{lemma}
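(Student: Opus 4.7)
The plan is to realize $\rad[\alpha](\ccat{X})$ as the union of the subideals generated by all subsets of $\ccat{X}$ of cardinality less than $\alpha$. Concretely, define
\begin{equation*}
\ccat{S} \colonequals \bigcup\set{\rad[\alpha](\ccat{X}')}{\ccat{X}'\subseteq\ccat{X},\ |\ccat{X}'|<\alpha}\,.
\end{equation*}
The conclusion is equivalent to the identity $\ccat{S}=\rad[\alpha](\ccat{X})$. The inclusion $\ccat{S}\subseteq\rad[\alpha](\ccat{X})$ is immediate from monotonicity of $\rad[\alpha](-)$, and the reverse inclusion will follow once we verify that $\ccat{S}$ is itself a radical $\alpha$-localizing tensor ideal containing $\ccat{X}$. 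Containment of $\ccat{X}$ is obvious by choosing singleton subsets.

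The verification of the closure properties of $\ccat{S}$ is a uniform exercise using regularity of $\alpha$: given finitely many or $\alpha$-many objects in $\ccat{S}$, each lies in some $\rad[\alpha](\ccat{X}_i)$ with $|\ccat{X}_i|<\alpha$, and the union $\ccat{X}''\colonequals\bigcup_i\ccat{X}_i$ still has $|\ccat{X}''|<\alpha$ since $\alpha$ is regular (the index set has size $<\alpha$, each summand has size $<\alpha$). Thus the desired cone, $\alpha$-coproduct, direct summand, or tensor product already lies in $\rad[\alpha](\ccat{X}'')\subseteq\ccat{S}$. The radical condition is handled in the same way: if $X^{\otimes n}\in\rad[\alpha](\ccat{X}')$ then $X\in\rad[\alpha](\ccat{X}')\subseteq\ccat{S}$ because that ideal is itself radical.

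The only step that requires a small moment of care is closure of $\ccat{S}$ under triangles: given $X\to Y\to Z\to \Sigma X$ with $X,Y\in\ccat{S}$, pick $\ccat{X}_X,\ccat{X}_Y\subseteq\ccat{X}$ of cardinality $<\alpha$ witnessing membership, and observe that $\ccat{X}_X\cup\ccat{X}_Y$ has cardinality $<\alpha$ (again by regularity of $\alpha$, applied to the two-element index set, which is allowed since $\alpha\ge\aleph_0$), so $Z\in\rad[\alpha](\ccat{X}_X\cup\ccat{X}_Y)\subseteq\ccat{S}$. The main conceptual point, and the only place where regularity of $\alpha$ is essential, is this observation that unions of fewer than $\alpha$ sets each of cardinality less than $\alpha$ remain of cardinality less than $\alpha$; everything else is bookkeeping.
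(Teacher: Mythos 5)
Your proof is correct and follows exactly the same strategy as the paper: both reduce the claim to showing that the union of the $\rad[\alpha](\ccat{X}')$ over all $\ccat{X}'\subseteq\ccat{X}$ with $|\ccat{X}'|<\alpha$ is itself a radical $\alpha$-localizing tensor ideal, the paper compressing the verification to ``by construction, it is such.'' You have simply spelled out the regularity-of-$\alpha$ bookkeeping that the paper leaves implicit.
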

\begin{proof}
It is enough to show that
\begin{equation*}
\rad[\alpha](\ccat{X}) = \bigcup_{\substack{\ccat{X}' \subseteq \ccat{X}\\|\ccat{X}'| < \alpha}} \rad[\alpha](\ccat{X}')\,.
\end{equation*}
The inclusion $\supseteq$ is obvious. For the reverse, it is enough to show the right-hand side is a radical $\alpha$-localizing triangulated subcategory of $\ccat{K}$. By construction, it is such. 
\end{proof}

\begin{lemma} \label{alphaCompactPrincipal}
The $\alpha$-compact elements of the frame $\Rad[\alpha](\ccat{K})$ are precisely the principal radical ideals of $\ccat{K}$.
\end{lemma}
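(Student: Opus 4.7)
The plan is to verify both inclusions, each reducing to a direct application of one of the preceding lemmas.

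For the direction ``principal implies $\alpha$-compact'', I would take $\ccat{I} = \rad[\alpha](X)$ and assume $\ccat{I} \leq \bigvee A$ in $\Rad[\alpha](\ccat{K})$. Since the join of a family of radical $\alpha$-localizing tensor ideals is generated by their union, one has $X \in \rad[\alpha]\bigl(\bigcup_{\ccat{J} \in A}\ccat{J}\bigr)$. Applying \cref{AlphaBuilding} produces a subset $\ccat{Y} \subseteq \bigcup_{\ccat{J} \in A}\ccat{J}$ with $|\ccat{Y}| < \alpha$ and $X \in \rad[\alpha](\ccat{Y})$. Choosing for each $Y \in \ccat{Y}$ a witness $\ccat{J}_Y \in A$ with $Y \in \ccat{J}_Y$ then yields a subfamily $A' \subseteq A$ with $|A'| < \alpha$ and $\ccat{I} \leq \bigvee A'$, establishing $\alpha$-compactness of $\ccat{I}$.

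For the reverse direction, I would start from the tautological expression
\begin{equation*}
\ccat{I} = \bigvee_{X \in \ccat{I}} \rad[\alpha](X)
\end{equation*}
and invoke $\alpha$-compactness of $\ccat{I}$ to extract a subset $\ccat{I}_0 \subseteq \ccat{I}$ with $|\ccat{I}_0| < \alpha$ and $\ccat{I} = \bigvee_{X \in \ccat{I}_0} \rad[\alpha](X)$. Because $|\ccat{I}_0| < \alpha$, \cref{le:coproduct-join} rewrites this join as the single principal ideal $\rad[\alpha]\bigl(\coprod_{X \in \ccat{I}_0} X\bigr)$, exhibiting $\ccat{I}$ as principal.

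The proof is essentially a packaging of \cref{AlphaBuilding,le:coproduct-join}, so no substantial obstacle is anticipated. The main point requiring care is, in the first half, correctly distinguishing the frame join in $\Rad[\alpha](\ccat{K})$ from the set-theoretic union of underlying objects, and using a choice function to transfer the cardinality bound on generators from \cref{AlphaBuilding} to a cardinality bound on the subfamily $A'$.
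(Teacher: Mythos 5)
Your proof is correct and follows essentially the same route as the paper: $\alpha$-compactness yields principality via \cref{le:coproduct-join}, and principality yields $\alpha$-compactness via \cref{AlphaBuilding}. Your handling of the second direction is in fact slightly more explicit than the paper's (working with the union of the ideals in $A$ and a choice of witnesses rather than the paper's reduction to principal generators), but it is the same argument in substance.
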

\begin{proof}
Let $\ccat{I} \in \Rad[\alpha](\ccat{K})$ be an $\alpha$-compact element. We can write
\begin{equation*}
\ccat{I} = \bigvee_{X \in \ccat{I}} \rad[\alpha](X)\,.
\end{equation*}
By assumption $\ccat{I}$ is $\alpha$-compact, and there exists $\ccat{S} \subseteq \ccat{I}$, such that $|\ccat{S}| < \alpha$ and
\begin{equation*}
\ccat{I} = \bigvee_{X \in \ccat{S}} \rad[\alpha](X) = \rad[\alpha]\left(\coprod_{X \in \ccat{S}} X\right)
\end{equation*}
by \cref{le:coproduct-join}. 

For the converse direction, let $\ccat{I} = \rad[\alpha](X)$ be a principal ideal with
\begin{equation*}
\ccat{I} \leq \bigvee_{\ccat{J} \in A} \ccat{J}
\end{equation*}
for some $A \subseteq \Rad[\alpha](\ccat{K})$. Without loss of generality, we may assume every $\ccat{J}$ is a principal ideal, that is we assume there exists $\ccat{X} \subseteq \ccat{K}$ with $X \in \rad[\alpha](\ccat{X})$. By \cref{AlphaBuilding}, there exists $\ccat{X}' \subseteq \ccat{X}$ with $|\ccat{X}'| < \alpha$ and $X \in \rad[\alpha](\ccat{X}')$. Then every principal ideal is $\alpha$-compact.
\end{proof}

\subsection*{The tensor property}

Let $\ccat{K}$ be a tensor triangulated category. When $\ccat{K}$ has $\alpha$-coproducts we say $\ccat{K}$ satisfies the \emph{$\alpha$-tensor property} if for all $X,Y \in \ccat{K}$
\begin{equation*}
\rad[\alpha](X\otimes Y) = \rad[\alpha](X) \wedge \rad[\alpha](Y)\,.
\end{equation*}
When $\ccat{K}$ has arbitrary coproducts we drop $\alpha$ and say $\ccat{K}$ satisfies the \emph{tensor property} if for all $X,Y \in \ccat{K}$
\begin{equation*}
\rad(X\otimes Y) = \rad(X) \wedge \rad(Y)\,.
\end{equation*}

For $\alpha = \aleph_0$ the $\alpha$-tensor property always holds and
$\Rad[\alpha](\ccat{K})$ is a coherent frame; see
\cite{Balmer:2005,Kock/Pitsch:2017}. For general $\alpha$ we do not expect the $\alpha$-tensor
property to hold, though no examples seem to be known.

\subsection*{Support}

Let $\alpha$ be a regular cardinal and $\ccat{K}$ a tensor triangulated category with $\alpha$-coproducts. An \emph{$\alpha$-support} on $\ccat{K}$ is a pair $(F, \sigma)$ consisting of a frame $F$ and a map $\sigma\colon\Obj(\ccat{K})\to F$ satisfying:
\begin{enumerate}
\item[(S1)] $\sigma(0)=0$ and $\sigma(\one)=1$,
\item[(S2)] $\sigma(\Sigma X)=\sigma(X)$ for every $X \in \ccat{K}$,
\item[(S3)] $\sigma(\coprod_{X \in \ccat{X}} X)=\bigvee_{X \in \ccat{X}} \sigma(X)$ for every family $\ccat{X} \subseteq \ccat{K}$ with $|\ccat{X}|<\alpha$,
\item[(S4)] $\sigma(X\otimes Y)=\sigma(X)\wedge \sigma(Y)$ for every $X,Y\in\ccat{K}$, and
\item[(S5)] $\sigma(Y)\leq \sigma(X)\vee \sigma(Z)$ for every exact triangle $X\to Y\to Z\to
\Sigma X$ in
$\ccat{K}$.
\end{enumerate}
A morphism of supports from $(F, \sigma)$ to $(F', \sigma')$ is a frame map $\varphi\colon F\to F'$ satisfying $\sigma=\varphi\circ \sigma'$.

When $\ccat{K}$ has arbitrary coproducts, then we call a pair $(F,\sigma)$ a \emph{support} on $\ccat{K}$ if it is an $\alpha$-support for every cardinal $\alpha$.

The traditional notion of support for objects of a triangulated category uses a topological space. The following result demonstrates that the notion of a frame is the appropriate generalization; it is the analogue of \cite[Theorem~3.2.3]{Kock/Pitsch:2017}.

\begin{theorem}\label{th:Rad-support}
Let $\alpha$ be a regular cardinal and $\ccat{K}$ a tensor triangulated category with $\alpha$-coproducts. Suppose that $\ccat{K}$ satisfies the $\alpha$-tensor property and $\Rad[\alpha](\ccat{K})$ is a set. Then the frame $\Rad[\alpha](\ccat{K})$ is $\alpha$-coherent and the map
\begin{equation*}
\Obj(\ccat{K})\lto \Rad[\alpha](\ccat{K})\,,\quad X\mapsto \rad[\alpha](X)
\end{equation*}
is an $\alpha$-support; it is initial among all $\alpha$-supports on $\ccat{K}$. 
\end{theorem}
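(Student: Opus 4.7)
The plan is to establish the three assertions separately. I expect the first two parts (coherence and the support axioms) to reduce to bookkeeping with the already-proven lemmas, while the initiality argument will require the key technical observation that the axioms (S1)--(S5) of an abstract support force certain ``downward-closed'' subclasses of $\ccat{K}$ to be radical $\alpha$-localizing tensor ideals.

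For $\alpha$-coherence, I would invoke \cref{alphaCompactPrincipal}, which identifies $\compact[\alpha]{\Rad[\alpha](\ccat{K})}$ with the principal ideals $\rad[\alpha](X)$. Condition (C1) is then immediate since $\ccat{I}=\bigvee_{X\in\ccat{I}}\rad[\alpha](X)$ for every $\ccat{I}\in\Rad[\alpha](\ccat{K})$. Condition (C2) holds because $\rad[\alpha](\one)=\ccat{K}$ is the top element. For (C3), the identity $\bigvee_{i}\rad[\alpha](X_i)=\rad[\alpha](\coprod_i X_i)$ of \cref{le:coproduct-join} gives closure under $\alpha$-joins, while the $\alpha$-tensor property assumed in the statement provides $\rad[\alpha](X)\wedge\rad[\alpha](Y)=\rad[\alpha](X\otimes Y)$, yielding closure under finite meets.

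To verify that $\sigma(X)\colonequals\rad[\alpha](X)$ is an $\alpha$-support, I would check (S1)--(S5) in order. Axioms (S1) and (S2) are direct from $\rad[\alpha](0)=0$, $\rad[\alpha](\one)=\ccat{K}$, and the fact that any triangulated subcategory is closed under $\Sigma$. Axiom (S3) is precisely \cref{le:coproduct-join}, (S4) is the $\alpha$-tensor property, and (S5) holds because $\rad[\alpha](X)\vee\rad[\alpha](Z)$ is a triangulated tensor ideal containing both $X$ and $Z$, hence also the middle term $Y$ of any triangle $X\to Y\to Z\to\Sigma X$.

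For initiality, given another $\alpha$-support $(F',\sigma')$, I would set
\begin{equation*}
\varphi(\ccat{I})\colonequals\bigvee_{X\in\ccat{I}}\sigma'(X)
\end{equation*}
and show $\varphi$ is the unique frame morphism with $\varphi\circ\sigma=\sigma'$. The crucial step is the identity $\varphi(\rad[\alpha](X))=\sigma'(X)$, which I would establish by introducing
\begin{equation*}
\ccat{G}_X\colonequals\set{Y\in\ccat{K}}{\sigma'(Y)\leq\sigma'(X)}
\end{equation*}
and verifying that $\ccat{G}_X$ is a radical $\alpha$-localizing tensor ideal containing $X$: closure under $\Sigma$ and $\alpha$-coproducts follows from (S2) and (S3); closure under the third term in any triangle with two outer terms in $\ccat{G}_X$ follows from (S5) applied to the two rotations; closure under tensoring with any object uses (S4); radicality is immediate from $\sigma'(Y^{\otimes n})=\sigma'(Y)$ via (S4); and closure under direct summands (when $\alpha=\aleph_0$) follows by applying (S3) to the split triangle $Y_1\to Y_1\oplus Y_2\to Y_2\to 0$. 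With this, $\rad[\alpha](X)\subseteq\ccat{G}_X$ gives the nontrivial inequality $\varphi(\rad[\alpha](X))\leq\sigma'(X)$.

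The remaining frame-morphism checks are then routine: preservation of $0$ and $1$ uses (S1); preservation of arbitrary joins follows by the analogous ``test-class'' argument applied to $\set{Y\in\ccat{K}}{\sigma'(Y)\leq\bigvee_{\ccat{J}\in A}\varphi(\ccat{J})}$; preservation of binary meets reduces, via the infinite-join distributive law in $F'$, to $\sigma'(X)\wedge\sigma'(Y)=\sigma'(X\otimes Y)$ together with $X\otimes Y\in\ccat{I}\cap\ccat{J}$ whenever $X\in\ccat{I}$, $Y\in\ccat{J}$. Uniqueness holds because any frame morphism $\psi$ with $\psi\circ\sigma=\sigma'$ must agree with $\varphi$ on principal ideals, and these generate $\Rad[\alpha](\ccat{K})$ under joins. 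The main obstacle throughout is the verification that $\ccat{G}_X$ and its variants really are radical $\alpha$-localizing tensor ideals; this is precisely where the interaction of the five support axioms is essential.
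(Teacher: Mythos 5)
Your proposal is correct and follows essentially the same route as the paper: coherence via \cref{alphaCompactPrincipal}, \cref{le:coproduct-join} and the $\alpha$-tensor property, and initiality via the observation that $\set{Y\in\ccat{K}}{\sigma'(Y)\leq\sigma'(X)}$ is a radical $\alpha$-localizing tensor ideal containing $X$, which is exactly the paper's well-definedness argument for $\varphi$ on principal ideals. Your version merely spells out in more detail the frame-morphism checks that the paper leaves implicit.
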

\begin{proof}
By \cref{alphaCompactPrincipal}, the $\alpha$-compact elements of $\Rad[\alpha]({\ccat{K}})$ are precisely the principal ideals. Since $\alpha$ is regular, they are closed under $\alpha$-joins, and by assumption, the principal ideals are also closed under finite meets. Lastly, every radical $\alpha$-ideal is the join of the principal ideals of its elements. Thus $\Rad[\alpha](\ccat{K})$ is $\alpha$-coherent.

We need to check for $\rad[\alpha](-)$ the defining conditions of an $\alpha$-support. Conditions (S1), (S2), and (S5) are clear. (S3) follows from \cref{le:coproduct-join}, and (S4) follows from the $\alpha$-tensor property. 

Now let $(F,\sigma)$ be an $\alpha$-support. We need to provide a unique frame morphism $\varphi\colon \Rad[\alpha](\ccat{K})\to F$. Since every element of $\Rad[\alpha](\ccat{K})$ is the join of principal ideals, by (S3) it is enough to define $\varphi$ on the principal ideals. We set
\begin{equation*}
\varphi(\rad[\alpha](X)) \colonequals \sigma(X)\,.
\end{equation*}
To show that this assignment is well-defined let $X,Y \in \ccat{K}$ with $\rad[\alpha](X)=\rad[\alpha](Y)$. Observe that $\set{Z \in \ccat{K}}{\sigma(Z)\leq\sigma(X)}$ is a radical $\alpha$-localizing tensor ideal containing $X$. Thus $\rad[\alpha](X)\supseteq\rad[\alpha](Y)$ implies $\sigma(X) \geq \sigma(Y)$. By symmetry the assertion follows. By construction, this morphism is unique. 
\end{proof}

The following result is the analogue of \cref{th:Rad-support}.

\begin{theorem}\label{th:Rad-support-infinite}
Let $\ccat{K}$ be a tensor triangulated category with arbitrary coproducts. Suppose that $\ccat{K}$ satisfies the tensor property and $\Rad(\ccat{K})$ is a set. Then the radical localizing tensor ideals of $\ccat{K}$ form a frame and the map
\begin{equation*}
\Obj(\ccat{K})\lto \Rad(\ccat{K})\,,\quad X\mapsto \rad(X)
\end{equation*}
is a support; it is initial among all supports on $\ccat{K}$. 
\end{theorem}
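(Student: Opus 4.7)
The plan is to imitate the proof of \cref{th:Rad-support} almost verbatim, since the hypotheses and conclusion are parallel once $\alpha$-coproducts are replaced by arbitrary coproducts. The frame structure on $\Rad(\ccat{K})$ is already built in: under the assumption that $\Rad(\ccat{K})$ is a set, \cref{RadAlphaFrame} directly supplies the infinite-join distributive law, so the first assertion is immediate and no analogue of the $\alpha$-coherence argument is needed.

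For the map $X\mapsto\rad(X)$, axioms (S1), (S2), and (S5) are formal from the closure properties defining a radical localizing tensor ideal, and (S4) is exactly the assumed tensor property. The axiom requiring thought is (S3), which demands
\begin{equation*}
\rad\Bigl(\coprod_{X\in\ccat{X}}X\Bigr) \;=\; \bigvee_{X\in\ccat{X}}\rad(X)
\end{equation*}
for every family $\ccat{X}\subseteq\ccat{K}$. Rather than passing through a regular cardinal $\alpha$ with $|\ccat{X}|<\alpha$ and invoking \cref{le:coproduct-join} inside the possibly different frame $\Rad[\alpha](\ccat{K})$, I would argue directly: each $X$ is a direct summand of the coproduct, and radical localizing tensor ideals are closed under direct summands, so $X\in\rad(\coprod X)$ and therefore $\bigvee_{X}\rad(X)\subseteq\rad(\coprod X)$. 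Conversely, $\bigvee_{X}\rad(X)$ is itself a radical localizing tensor ideal containing every $X$ and is closed under arbitrary coproducts, hence contains $\coprod X$.

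For the initiality statement, given any support $(F,\sigma)$ on $\ccat{K}$, I would define a frame morphism $\varphi\colon\Rad(\ccat{K})\to F$ by setting $\varphi(\rad(X))\colonequals\sigma(X)$ on principal ideals and extending by joins, which is justified by (S3) for $\sigma$ together with the fact that every element of $\Rad(\ccat{K})$ is a join of principal elements (\cref{le:Rad-set}). Well-definedness is the only point of care: if $\rad(X)=\rad(Y)$, then $\set{Z\in\ccat{K}}{\sigma(Z)\leq\sigma(X)}$ is a radical localizing tensor ideal containing $X$, hence contains $Y$, and symmetry forces $\sigma(X)=\sigma(Y)$. Uniqueness of $\varphi$ is then automatic because principal ideals generate $\Rad(\ccat{K})$ under joins. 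The only genuine obstacle is the $\alpha$-independent verification of (S3); the direct-summand argument above bypasses any cofinality manipulation and keeps the proof entirely parallel to that of \cref{th:Rad-support}.
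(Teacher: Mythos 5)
Your proposal is correct and is essentially the paper's own proof: the paper simply says to adapt the proof of \cref{th:Rad-support}, and your adaptation follows the same steps (the frame structure from \cref{RadAlphaFrame}, (S4) from the tensor property, and the same well-definedness argument for initiality via the ideal $\set{Z\in\ccat{K}}{\sigma(Z)\leq\sigma(X)}$). Your direct-summand argument for (S3) is just the proof of \cref{le:coproduct-join} transposed to arbitrary coproducts, where the cardinality bookkeeping indeed becomes unnecessary.
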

\begin{proof}
Adapt the proof of \cref{th:Rad-support}.
\end{proof}

\subsection*{Prime tensor ideals}

An $\alpha$-localizing tensor ideal $\ccat P\subseteq\ccat{K}$ is called \emph{prime}, if
\begin{equation*}
X \otimes Y \in \ccat{P} \text{ for } X, Y \in \ccat{K} \,\implies\, X \in \ccat{P} \text{ or } Y \in \ccat{P}\,.
\end{equation*}
Any prime tensor ideal is radical. 

\begin{lemma} \label{PrimeElementsIdeals}
Suppose that $\ccat{K}$ satisfies the $\alpha$-tensor property and $\Rad[\alpha](\ccat{K})$ is a set. The prime elements of $\Rad[\alpha](\ccat{K})$ are precisely the prime $\alpha$-localizing tensor ideals of $\ccat{K}$.
\end{lemma}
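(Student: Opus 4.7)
The plan is to verify the two inclusions separately, using the $\alpha$-tensor property as the bridge between the lattice-theoretic notion of a prime element and the ideal-theoretic notion of a prime tensor ideal. I will also rely on the fact that any prime tensor ideal is proper (and hence distinct from the top element $\ccat{K}\in\Rad[\alpha](\ccat{K})$), which the definition should be read as including, since otherwise $\ccat{K}$ itself would trivially satisfy the defining implication.

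For the direction prime element $\implies$ prime tensor ideal, I would take a prime $\ccat{P}\in\Rad[\alpha](\ccat{K})$ and assume $X\otimes Y\in\ccat{P}$. Then $\rad[\alpha](X\otimes Y)\leq\ccat{P}$, and by the $\alpha$-tensor property this equals $\rad[\alpha](X)\wedge\rad[\alpha](Y)$. Applying the prime element property gives $\rad[\alpha](X)\leq\ccat{P}$ or $\rad[\alpha](Y)\leq\ccat{P}$, whence $X\in\ccat{P}$ or $Y\in\ccat{P}$. This direction is the one that genuinely requires the $\alpha$-tensor property.

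For the converse, I would let $\ccat{P}$ be a prime $\alpha$-localizing tensor ideal and suppose that $\ccat{I}\wedge\ccat{J}\leq\ccat{P}$ for some $\ccat{I},\ccat{J}\in\Rad[\alpha](\ccat{K})$. Arguing by contradiction, suppose $\ccat{I}\not\leq\ccat{P}$ and $\ccat{J}\not\leq\ccat{P}$, and choose witnesses $X\in\ccat{I}\setminus\ccat{P}$ and $Y\in\ccat{J}\setminus\ccat{P}$. Since $\ccat{I}$ and $\ccat{J}$ are tensor ideals, $X\otimes Y$ lies in both, hence in $\ccat{I}\cap\ccat{J}=\ccat{I}\wedge\ccat{J}\leq\ccat{P}$. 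The prime ideal hypothesis then forces $X\in\ccat{P}$ or $Y\in\ccat{P}$, a contradiction. This direction does not need the $\alpha$-tensor property at all; it relies only on the fact that meets in $\Rad[\alpha](\ccat{K})$ are intersections.

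I do not anticipate a real obstacle here — the argument is essentially bookkeeping, with the only subtle point being the invocation of the $\alpha$-tensor property in the first direction (without it, one would only get $\rad[\alpha](X\otimes Y)\leq\rad[\alpha](X)\wedge\rad[\alpha](Y)$, which goes the wrong way for the prime argument). The treatment mirrors the classical case $\alpha=\aleph_0$ in \cite{Balmer:2005} and \cite{Kock/Pitsch:2017}, with the $\alpha$-tensor property playing the role that the always-valid identity $\rad(X\otimes Y)=\rad(X)\cap\rad(Y)$ plays there.
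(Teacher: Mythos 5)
Your proof is correct and follows essentially the same route as the paper: the $\alpha$-tensor property is used exactly where the paper uses it (prime element $\implies$ prime tensor ideal), and the converse is the same meets-are-intersections argument, merely phrased by contradiction rather than directly. Your side remark on properness (reading the tensor-ideal definition as excluding $\ccat{K}$ itself, matching the requirement $p\neq 1$ for prime lattice elements) is a reasonable reading that the paper leaves implicit.
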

\begin{proof}
Let $\ccat{P}$ be a prime $\alpha$-localizing tensor ideal of $\ccat{K}$. Assume $\ccat{I} \wedge \ccat{J} \leq \ccat{P}$ for some $\ccat{I}, \ccat{J} \in \Rad[\alpha](\ccat{K})$. That is the intersection $\ccat{I} \cap \ccat{J}$ is contained in $\ccat{P}$. We assume $\ccat{I} \not\subseteq \ccat{P}$. Then there exists $X \in \ccat{I}$ with $X \notin \ccat{P}$, and we obtain
\begin{equation*}
\left(\forall Y \in \ccat{J}: X \otimes Y \in \ccat{I} \cap \ccat{J} \subseteq \ccat{P} \,\implies\, Y \in \ccat{P}\right) \,\implies\, \ccat{J} \subseteq \ccat{P}\,.
\end{equation*}
That is $\ccat{P}$ is a prime element of the frame $\Rad[\alpha](\ccat{K})$. 

For the reverse direction, we assume $\ccat{P}$ is a prime element of $\Rad[\alpha](\ccat{K})$. Assume $X \otimes Y \in \ccat{P}$. Then by the $\alpha$-tensor property
\begin{equation*}
\rad[\alpha](X) \wedge \rad[\alpha](Y) = \rad[\alpha](X \otimes Y) \subseteq \ccat{P}\,,
\end{equation*}
and by assumption $\rad[\alpha](X) \subseteq \ccat{P}$ or $\rad[\alpha](Y) \subseteq \ccat{P}$. That is $X \in \ccat{P}$ or $Y \in \ccat{P}$, and $\ccat{P}$ is a prime $\alpha$-localizing tensor ideal.
\end{proof}

\subsection*{The spectrum}

Let $\alpha$ be a regular cardinal and $\ccat{K}$ a tensor triangulated category with $\alpha$-coproducts. Suppose that $\ccat{K}$ satisfies the $\alpha$-tensor property and that $\Rad[\alpha](\ccat{K})$ is a set. Then we define its \emph{$\alpha$-spectrum} to be the space associated to the frame $\Rad[\alpha](\ccat{K})$, so
\begin{equation*}
\Spec^\alpha(\ccat{K})\colonequals\pt (\Rad[\alpha](\ccat{K}))\,.
\end{equation*}
Analogously, we set
\begin{equation*}
\Spec(\ccat{K})\colonequals\pt(\Rad(\ccat{K}))
\end{equation*}
when $\ccat{K}$ has arbitrary coproducts, assuming $\ccat{K}$ satisfies the tensor property and $\Rad(\ccat{K})$ is a set. We hasten to add that these spaces may be empty, except when the corresponding frames are spatial, e.g.\@ when $\alpha = \aleph_0$. For $\alpha > \aleph_0$ interesting examples arise by taking the categories of $\alpha$-compact objects of the compactly generated tensor triangulated categories which are listed in the introduction.

\section{Well-generated tensor triangulated categories}

In this section we study well-generated triangulated categories as introduced by Neeman \cite{Neeman:2001}. This includes the class of compactly generated categories. Each well-generated triangulated category admits a filtration given by the full subcategories of $\alpha$-compact objects, where $\alpha$ runs through all regular cardinals. This filtration induces a filtration of the frame of radical localizing tensor ideals when the category is tensor triangulated.

A motivation for studying well-generated triangulated categories is the fact that localizing subcategories or Verdier quotient categories of a compactly generated category are usually not compactly generated. However, the class of well-generated categories is closed under localizing subcategories generated by sets of objects and under Verdier quotients with respect to such localizing subcategories. Thus the class of well-generated tensor triangulated categories seems to be the appropriate universe for studying tensor triangulated categories with arbitrary coproducts.

\subsection*{Well-generated triangulated categories}

Let $\ccat{T}$ be a triangulated category with arbitrary coproducts. For a regular cardinal $\alpha$, an object $X\in\ccat{T}$ is \emph{$\alpha$-small} if every morphism $X\to\coprod_{X' \in \ccat{X}} X'$ factors through $\coprod_{X' \in \ccat{X}'} X'$ for some subset $\ccat{X}' \subseteq \ccat{X}$ with $|\ccat{X}'|<\alpha$. We denote by $\compact[\alpha]{\ccat{T}}$ the full subcategory of \emph{$\alpha$-compact objects} in $\ccat{T}$. This is the maximal $\alpha$-perfect class of $\alpha$-small objects of $\ccat{T}$, and it is an $\alpha$-localizing subcategory of $\ccat{T}$. When $\alpha = \aleph_0$, then the notions $\aleph_0$-small and $\aleph_0$-compact coincide and such objects are called \emph{compact}.

A triangulated category $\ccat{T}$ with arbitrary coproducts is \emph{well-generated} if there exists a regular cardinal $\alpha$ and a set of $\alpha$-compact objects generating $\ccat{T}$; see \cite{Neeman:2001,Krause:2001} for details. In that case one calls $\ccat{T}$ \emph{$\alpha$-compactly generated}. If $\alpha=\aleph_0$, then $\ccat{T}$ is called \emph{compactly generated}.

Now suppose that $\ccat{T}$ is $\alpha$-compactly generated, and therefore $\beta$-compactly generated for every regular cardinal $\beta \geq \alpha$. Then
\begin{equation*}
\compact[\beta]{\ccat{T}} = \loc[\beta](\compact[\alpha]{\ccat{T}})\,.
\end{equation*}
Moreover, by \cite[Proposition~8.4.2]{Neeman:2001} the category $\compact[\alpha]{\ccat{T}}$ is essentially small and 
\begin{equation*}
\ccat{T} = \bigcup_{\beta\geq \alpha} \compact[\beta]{\ccat{T}}\,,
\end{equation*}
where $\beta$ runs through all regular cardinals. In particular, every $\alpha$-localizing subcategory of $\compact[\alpha]{\ccat{T}}$ is generated by a set and the $\alpha$-localizing subcategories of $\compact[\alpha]{\ccat{T}}$ form a set, and thus a lattice.

\subsection*{Extension and restriction}

Fix a well-generated tensor triangulated category $(\ccat{T},\otimes,\one)$. We assume the localizing tensor ideals of $\ccat{T}$ that are generated by sets of objects form a set. In that case every localizing tensor ideal is actually generated by a set of objects, cf.\@ \cref{le:Rad-set}.

Let $\alpha \leq \beta$ be regular cardinals. Then there is a pair of maps
\begin{equation*}
\begin{tikzcd}[column sep=huge]
\Loc[\alpha](\ccat{T^\alpha})\ar[r,shift left] & \Loc[\beta](\ccat{T^\beta}) \ar[l,shift
left]
\end{tikzcd}
\end{equation*}
given by
\begin{equation*}
\ccat{I}\mapsto \loc[\beta](\ccat{I})\qquad\text{and} \qquad\ccat{J}
\cap \compact[\alpha]{\ccat{T}}\mapsfrom\ccat J\,.
\end{equation*}
The first map is \emph{extension to $\compact[\beta]{\ccat{T}}$} and the second \emph{restriction to $\compact[\alpha]{\ccat{T}}$}. Both maps preserve the order, but in general neither is a morphism of frames: Extension preservers joins but need not preserve meets and restriction preserves meets but need not preserve joins. 

From the description of the maps above we obtain
\begin{equation} \label{eq:ExtRestContainment}
\ccat{I}\leq \loc[\beta](\ccat{I}) \cap \compact[\alpha]{\ccat{T}} \qquad\text{and}\qquad \loc[\beta](\ccat{J} \cap \compact[\alpha]{\ccat{T}}) \leq \ccat{J}\,.
\end{equation}
When $\ccat{T}$ is $\alpha$-compactly generated, then the first inclusion becomes equality. This follows from Thomason's localization theorem, which was first proved by Thomason \cite{Thomason/Trobaugh:1990}. In this generality it is due to Neeman \cite[Theorem~1.14]{Neeman:2001}. Furthermore the same holds when we replace $\Loc[\beta](\compact[\beta]{\ccat{T}})$ by $\Loc(\ccat{T})$, and $\loc[\beta](-)$ by $\loc(-)$. If we think of the lattices underlying the frames as categories as in the discussion before \cref{CohcatDistcatEquiv}, then extension is left adjoint to restriction. In particular, \eqref{eq:ExtRestContainment} provides unit and counit of the adjunction.

The extension and restriction maps induce maps $\extend{\alpha}{\beta}{(-)}$ and $\restrict{\beta}{\alpha}{(-)}$, respectively, on the frame of radical $\alpha$-localizing tensor ideals by taking radical closures. Restriction preserves the radical property and so the maps are given by:
\begin{equation*}
\extend{\alpha}{\beta}{\ccat{I}} \colonequals \rad[\beta](\ccat{I}) \quad\text{and}\quad \restrict{\beta}{\alpha}{\ccat{J}} \colonequals \ccat{J} \cap \compact[\alpha]{\ccat{T}}
\end{equation*}
for $\ccat{I} \in \RadT{\alpha}$ and $\ccat{J} \in \RadT{\beta}$. Again, these need not be morphisms of frames. 

We obtain the same adjointness relations as in \eqref{eq:ExtRestContainment}; when $\ccat{T}$ is $\alpha$-compactly generated, then:
\begin{equation} \label{eq:ExtRestWellGenerated}
\restrict{\beta}{\alpha}{(\extend{\alpha}{\beta}{\ccat{I}})} = \ccat{I} \quad\text{and}\quad \extend{\alpha}{\beta}{(\restrict{\beta}{\alpha}{\ccat{J}})} \leq \ccat{J}\,.
\end{equation}
In particular, this means that extension is an injective map.

\begin{lemma} \label{ExtensionInjective}
Let $\alpha \leq \beta$ be regular cardinals and $\ccat{T}$ $\alpha$-compactly generated. We assume
\begin{equation*}
\rad[\beta](X\otimes Y) = \rad[\beta](X) \wedge \rad[\beta](Y)
\end{equation*}
for any $X,Y \in \compact[\alpha]{\ccat{T}}$. Then
\begin{enumerate}
\item $\compact[\alpha]{\ccat{T}}$ satisfies the $\alpha$-tensor property, and
\item the extension map $\extend{\alpha}{\beta}{(-)}$ is a morphism of frames.
\end{enumerate}
\end{lemma}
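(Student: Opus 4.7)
The plan is to deduce both assertions from the hypothesized tensor identity by transferring information across the extension-restriction adjunction \eqref{eq:ExtRestWellGenerated}. A useful preliminary observation is that $\extend{\alpha}{\beta}{\rad[\alpha](X)} = \rad[\beta](X)$ for any $X \in \compact[\alpha]{\ccat{T}}$: the relation $\rad[\beta](X) \cap \compact[\alpha]{\ccat{T}} = \rad[\alpha](X)$ from \eqref{eq:ExtRestWellGenerated} yields $\rad[\alpha](X) \subseteq \rad[\beta](X)$, whence $\rad[\beta](\rad[\alpha](X)) \subseteq \rad[\beta](X)$; the reverse inclusion is immediate.

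For part (1), I would restrict the hypothesis back to $\compact[\alpha]{\ccat{T}}$. Applying $(-) \cap \compact[\alpha]{\ccat{T}}$ to $\rad[\beta](X \otimes Y) = \rad[\beta](X) \wedge \rad[\beta](Y)$ produces $\rad[\alpha](X \otimes Y)$ on the left (by \eqref{eq:ExtRestWellGenerated}) and $\rad[\alpha](X) \wedge \rad[\alpha](Y)$ on the right, since the meet in $\Rad[\beta](\compact[\beta]{\ccat{T}})$ is just intersection and commutes with intersecting against $\compact[\alpha]{\ccat{T}}$. This is precisely the $\alpha$-tensor property for $\compact[\alpha]{\ccat{T}}$.

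For part (2), I would verify separately that $\extend{\alpha}{\beta}{(-)}$ preserves arbitrary joins (and bottom) and finite meets (and top). Joins are preserved by a direct check from the formula $\extend{\alpha}{\beta}{\ccat{I}} = \rad[\beta](\ccat{I})$, or equivalently because extension is left adjoint to restriction. The top $\compact[\alpha]{\ccat{T}}$ maps to $\rad[\beta](\compact[\alpha]{\ccat{T}}) = \compact[\beta]{\ccat{T}}$, since $\loc[\beta](\compact[\alpha]{\ccat{T}}) = \compact[\beta]{\ccat{T}}$ is already radical. For the binary meet of $\ccat{I}, \ccat{J} \in \Rad[\alpha](\compact[\alpha]{\ccat{T}})$, I would invoke part (1) together with \cref{th:Rad-support} to conclude that $\Rad[\alpha](\compact[\alpha]{\ccat{T}})$ is $\alpha$-coherent, so one may write $\ccat{I} = \bigvee_i \rad[\alpha](X_i)$ and $\ccat{J} = \bigvee_j \rad[\alpha](Y_j)$. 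Infinite distributivity in $\Rad[\alpha](\compact[\alpha]{\ccat{T}})$ together with the $\alpha$-tensor property yields $\ccat{I} \wedge \ccat{J} = \bigvee_{i,j} \rad[\alpha](X_i \otimes Y_j)$; applying extension (using the preliminary identity) and then the hypothesis produces $\bigvee_{i,j}(\rad[\beta](X_i) \wedge \rad[\beta](Y_j))$, and a final appeal to infinite distributivity in $\Rad[\beta](\compact[\beta]{\ccat{T}})$ rearranges this as $\extend{\alpha}{\beta}{\ccat{I}} \wedge \extend{\alpha}{\beta}{\ccat{J}}$.

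The main obstacle is recognizing the logical order: preservation of meets cannot be checked directly on arbitrary radical ideals, but must first be reduced to principal generators by upgrading the source to an $\alpha$-coherent frame via part (1). After that reduction the hypothesis applies verbatim, and the two distributive laws in the source and target frames assemble the result mechanically.
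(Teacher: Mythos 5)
Your argument is correct and follows essentially the same route as the paper: part~(1) comes from applying the meet-preserving restriction map to the hypothesis, and part~(2) reduces preservation of finite meets to principal ideals using that extension preserves joins and both frames satisfy the infinite-join distributive law. One small wrinkle: you justify the preliminary identity $\extend{\alpha}{\beta}{\rad[\alpha](X)}=\rad[\beta](X)$ by citing the relation $\rad[\beta](X)\cap\compact[\alpha]{\ccat{T}}=\rad[\alpha](X)$ as ``from \eqref{eq:ExtRestWellGenerated},'' but that relation is only what \eqref{eq:ExtRestWellGenerated} becomes after one already knows the identity you are proving; the inclusion $\rad[\alpha](X)\subseteq\rad[\beta](X)$ you actually need is immediate from the definitions (any radical $\beta$-localizing tensor ideal of $\compact[\beta]{\ccat{T}}$ meets $\compact[\alpha]{\ccat{T}}$ in a radical $\alpha$-localizing tensor ideal), so the argument stands once that citation is corrected.
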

\begin{proof}
The first claim follows from the fact that the restriction map preserves meets. 

Since every element in $\Rad[\alpha](\compact[\alpha]{\ccat{T}})$ can be written as the join of principal elements and extension preserves joins, it is enough to show that extension preserves the meet of principal elements. But this is clear, since the meet of principal elements is given by the tensor products. 
\end{proof}

\begin{lemma} \label{RadalphaSpatial}
Let $\alpha \leq \beta$ be regular cardinals and $\ccat{T}$ $\alpha$-compactly generated. We assume
\begin{equation*}
\rad[\beta](X\otimes Y) = \rad[\beta](X) \wedge \rad[\beta](Y)
\end{equation*}
for any $X,Y \in \compact[\alpha]{\ccat{T}}$. If the frame $\RadT{\beta}$ is spatial, then $\RadT{\alpha}$ is spatial.
\end{lemma}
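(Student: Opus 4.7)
The plan is to transport a separating point from $\RadT{\beta}$ back to $\RadT{\alpha}$ along the extension map, using that the extension map is a morphism of frames (hence composable with points) and is injective (so it reflects strict inequalities).

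First I would record the two structural facts about extension that I need. By \cref{ExtensionInjective}, the hypothesis on $\rad[\beta](X\otimes Y)$ ensures that $\extend{\alpha}{\beta}{(-)}\colon \RadT{\alpha}\to\RadT{\beta}$ is a morphism of frames. Since $\ccat{T}$ is $\alpha$-compactly generated, the adjunction relations in \eqref{eq:ExtRestWellGenerated} give $\restrict{\beta}{\alpha}{(\extend{\alpha}{\beta}{\ccat{I}})}=\ccat{I}$ for every $\ccat{I}\in\RadT{\alpha}$. In particular $\extend{\alpha}{\beta}{(-)}$ is injective, and moreover it reflects the order: if $\extend{\alpha}{\beta}{\ccat{I}}\le\extend{\alpha}{\beta}{\ccat{J}}$, then applying the order-preserving restriction map yields $\ccat{I}\le\ccat{J}$.

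Now I would verify spatiality directly from the definition. Fix $\ccat{I},\ccat{J}\in\RadT{\alpha}$ with $\ccat{I}\not\le\ccat{J}$. By the previous paragraph we have $\extend{\alpha}{\beta}{\ccat{I}}\not\le\extend{\alpha}{\beta}{\ccat{J}}$ in $\RadT{\beta}$. Since $\RadT{\beta}$ is assumed spatial, there exists a point $y\colon\RadT{\beta}\to\{0,1\}$ with $y(\extend{\alpha}{\beta}{\ccat{I}})=1$ and $y(\extend{\alpha}{\beta}{\ccat{J}})=0$. Define
\begin{equation*}
x\colonequals y\circ\extend{\alpha}{\beta}{(-)}\colon \RadT{\alpha}\lto\{0,1\}\,.
\end{equation*}
Being the composition of two frame morphisms, $x$ is itself a frame morphism, i.e.\@ a point of $\RadT{\alpha}$. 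By construction $x(\ccat{I})=1$ and $x(\ccat{J})=0$, so $\RadT{\alpha}$ has enough points, hence is spatial.

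There is no genuine obstacle; the only subtlety is the verification that extension reflects strict inequalities, which is immediate from the identity $\restrict{\beta}{\alpha}{\circ}\extend{\alpha}{\beta}{}=\mathrm{id}$ recorded in \eqref{eq:ExtRestWellGenerated} together with monotonicity of restriction. The tensor-product hypothesis enters only to guarantee that extension is a frame morphism (so that composition with $y$ preserves finite meets and arbitrary joins), which is precisely what \cref{ExtensionInjective} provides.
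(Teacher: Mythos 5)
Your proposal is correct and follows essentially the same route as the paper: use \cref{ExtensionInjective} to see that $\extend{\alpha}{\beta}{(-)}$ is an injective frame morphism, then compose a separating point of $\RadT{\beta}$ with it to separate $\ccat{I}$ from $\ccat{J}$ in $\RadT{\alpha}$. Your explicit check that extension reflects the order (via $\restrict{\beta}{\alpha}{\circ}\extend{\alpha}{\beta}{}=\mathrm{id}$ from \eqref{eq:ExtRestWellGenerated}) is a detail the paper leaves implicit in calling extension an ``inclusion of frames'', but it is the same argument.
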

\begin{proof}
By \cref{ExtensionInjective}, the extension map $\extend{\alpha}{\beta}{(-)}$ is an inclusion of frames. For any pair of ideals $\ccat{I}\not\leq\ccat J$ in $\RadT{\alpha}$ there is point $x\colon \RadT{\beta}\to\{0,1\}$ separating $\extend{\alpha}{\beta}{\ccat{I}}$ from $\extend{\alpha}{\beta}{\ccat{J}}$. Then the composite $x\circ \extend{\alpha}{\beta}{(-)}$ separates $\ccat{I}$ from $\ccat J$. Thus $\RadT{\alpha}$ has enough points.
\end{proof}

\subsection*{Stratification}

For any well-generated tensor triangulated category we introduce a notion of stratification, following the concepts from \cite{Benson/Iyengar/Krause:2011,Barthel/Heard/Sanders:2021} for compactly generated triangulated categories.

\begin{definition} \label{dfn:compactly-stratified}
Let $\ccat{T}$ be a well-generated tensor triangulated category, and assume the localizing tensor ideals of $\ccat{T}$ form a set. We call $\ccat{T}$ \emph{$\alpha$-compactly stratified} if
\begin{enumerate}
\item[(CS1)] the triangulated category $\ccat{T}$ is $\alpha$-compactly generated,
\item[(CS2)] the frame $\Rad(\ccat{T})$ is spatial, 
\item[(CS3)] the tensor triangulated category $\ccat{T}$ satisfies the tensor property,
\item[(CS4)] the induced map $\pt(\Rad(\ccat{T})) \to \pt(\Rad[\alpha](\compact[\alpha]{\ccat{T}}))$ is a bijection, and
\item[(CS5)] every point of $\Rad[\alpha](\compact[\alpha]{\ccat{T}})$ is locally closed. 
\end{enumerate}
As before we drop the cardinal $\alpha=\aleph_0$ and call $\ccat{T}$ \emph{compactly stratified} if it is $\aleph_0$-compactly stratified.
\end{definition}

The conditions (CS1)--(CS3) imply that the frame $\Rad[\alpha](\compact[\alpha]{\ccat{T}})$ is spatial and that extension yields an injective morphism $\Rad[\alpha](\compact[\alpha]{\ccat{T}})\to\Rad(\ccat{T})$ of frames. This follows from \cref{ExtensionInjective,RadalphaSpatial}. In particular, the map in (CS4) is well-defined.

\begin{theorem} \label{bijectionPointsRad}
Let $\ccat{T}$ be a well-generated tensor triangulated category and let $\alpha \leq \beta$ be regular cardinals. If $\ccat{T}$ is $\alpha$-compactly stratified, then $\ccat{T}$ is $\beta$-compactly stratified.
\end{theorem}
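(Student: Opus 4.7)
The plan is to apply \cref{BijPoints} to the factorization
\begin{equation*}
\Rad[\alpha](\compact[\alpha]{\ccat{T}}) \xrightarrow{\varphi} \Rad[\beta](\compact[\beta]{\ccat{T}}) \xrightarrow{\psi} \Rad(\ccat{T})
\end{equation*}
where $\varphi$ and $\psi$ are the extension-of-ideals morphisms, and then to read off (CS4) and (CS5) for $\beta$, while observing that (CS1)--(CS3) for $\beta$ come for free. Indeed (CS2) and (CS3) do not mention the cardinal and thus transfer automatically from $\alpha$ to $\beta$; (CS1) follows since $\compact[\alpha]{\ccat{T}}\subseteq\compact[\beta]{\ccat{T}}$ generates $\ccat{T}$.

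First I would verify that $\varphi$ and $\psi$ meet the hypotheses of \cref{BijPoints}, namely that each is an injective morphism of spatial frames. Since restriction preserves meets and Thomason--Neeman localization gives $\ccat{I}=\restrict{\beta}{\alpha}{(\extend{\alpha}{\beta}{\ccat{I}})}$ as in \eqref{eq:ExtRestWellGenerated}, the tensor property (CS3) for $\ccat{T}$ restricts to the multiplicativity hypothesis of \cref{ExtensionInjective} on $\compact[\alpha]{\ccat{T}}$ and on $\compact[\beta]{\ccat{T}}$, so that \cref{ExtensionInjective} makes $\varphi$ and $\psi$ morphisms of frames; \eqref{eq:ExtRestWellGenerated} also provides injectivity. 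Spatiality of $\Rad(\ccat{T})$ is given by (CS2), and the argument of \cref{RadalphaSpatial} (spatiality is inherited by the source of an injective frame morphism) then yields spatiality of $\Rad[\alpha](\compact[\alpha]{\ccat{T}})$ and of $\Rad[\beta](\compact[\beta]{\ccat{T}})$.

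The composite $\psi\circ\varphi$ is the extension morphism from $\Rad[\alpha](\compact[\alpha]{\ccat{T}})$ directly to $\Rad(\ccat{T})$, so $\pt(\psi\circ\varphi)$ is a bijection by (CS4) for $\alpha$. Combined with (CS5) for $\alpha$ the full hypothesis of \cref{BijPoints} is now in place, and I conclude that both $\pt(\varphi)$ and $\pt(\psi)$ are bijections. The bijectivity of $\pt(\psi)$ is precisely (CS4) for $\beta$. The bijectivity of $\pt(\varphi)$ together with (CS5) for $\alpha$ then yields (CS5) for $\beta$ via \cref{strongly-visible}, which completes the verification.

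The main obstacle, really a technicality, is that \cref{ExtensionInjective} is stated between two regular cardinals, while $\psi\colon\Rad[\beta](\compact[\beta]{\ccat{T}})\to\Rad(\ccat{T})$ extends to the unbounded case. This is handled by choosing a regular cardinal $\gamma\ge\beta$ sufficiently large that every radical localizing tensor ideal of $\ccat{T}$ is the extension of an element of $\Rad[\gamma](\compact[\gamma]{\ccat{T}})$; since every such ideal is generated by a set of objects by \cref{le:Rad-set}, such a $\gamma$ exists. Then $\psi$ factors through $\Rad[\gamma](\compact[\gamma]{\ccat{T}})$, where the two-cardinal case of \cref{ExtensionInjective} and \eqref{eq:ExtRestWellGenerated} apply, and the second factor is an isomorphism onto $\Rad(\ccat{T})$, so the entire argument reduces to the already treated regular case.
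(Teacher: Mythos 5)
Your proposal is correct and follows essentially the same route as the paper: both factor the extension morphism as $\Rad[\alpha](\compact[\alpha]{\ccat{T}})\to\Rad[\beta](\compact[\beta]{\ccat{T}})\to\Rad(\ccat{T})$, use \cref{ExtensionInjective} and \cref{RadalphaSpatial} to get injective morphisms of spatial frames, apply \cref{BijPoints} to obtain (CS4) for $\beta$, and conclude (CS5) via \cref{strongly-visible}. Your extra step through a large cardinal $\gamma$ is a harmless repackaging of what the paper handles directly, namely that the Thomason--Neeman identities and the frame-morphism property hold verbatim for extension to $\Rad(\ccat{T})$ itself.
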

\begin{proof}
By \cref{RadalphaSpatial}, the frames $\RadT{\alpha}$ and $\RadT{\beta}$ are spatial. It follows from \cref{ExtensionInjective} that the extension maps $\extend{\alpha}{\beta}{(-)}$ and $\extend{\beta}{{}}{(-)}$ are injective morphisms of spatial frames. Then \cref{BijPoints} implies that the induced map $\pt(\Rad(\ccat{T})) \to \pt (\RadT{\beta})$ is a bijection. That every point of $\RadT{\beta}$ is locally closed follows from \cref{strongly-visible}.
\end{proof}

\cref{th:cofiltration} from the introduction is a consequence. We use the fact that any set of objects of a well-generated category is $\alpha$-compact for some sufficiently big regular cardinal $\alpha$.

\begin{proof}[Proof of \cref{th:cofiltration}]
We assume that $\ccat{T}$ is $\alpha$-compactly stratified for some
regular cardinal $\alpha$. Then for each regular cardinal $\beta\ge\alpha$ the inclusion $\compact[\beta]{\ccat{T}}\to\ccat{T}$ induces a frame morphism $\RadT{\beta} \to \Rad(\ccat{T})$. This yields a compatible family of continuous maps $\Spec(\ccat{T})\to \Spec^\beta(\compact[\beta]{\ccat{T}})$, which are bijections (thanks to \cref{bijectionPointsRad}) and become homeomorphisms when all ideals in $\Rad(\ccat{T})$ are generated by $\beta$-compact objects.
\end{proof}

The following corollary produces further classes of $\alpha$-compactly stratified categories.

\begin{corollary}\label{co:bijectionPointsRad} 
Let $\ccat{T}$ be a well-generated tensor triangulated category and suppose that $\ccat{T}$ is $\alpha$-compactly stratified. If $\ccat{S}\subseteq\ccat{T}$ is a radical localizing tensor ideal, then the quotient $\ccat{T}/\ccat{S}$ is $\beta$-compactly stratified for some regular cardinal $\beta\geq \alpha$.
\end{corollary}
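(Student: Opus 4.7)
The plan is to exploit \cref{bijectionPointsRad}: by replacing $\alpha$ with a larger regular cardinal we may enlarge our working cardinal freely, and then transfer the stratification from $\ccat{T}$ to $\ccat{T}/\ccat{S}$ using the standard correspondence between radical localizing tensor ideals of a quotient and those of $\ccat{T}$ containing $\ccat{S}$.

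First, I would choose a regular cardinal $\beta\geq\alpha$ large enough so that: $\ccat{S}$ is generated, as a localizing tensor ideal, by a set of $\beta$-compact objects (possible because $\ccat{S}$ is generated by a set by \cref{le:Rad-set}, and any set in a well-generated category lies in $\compact[\beta]{\ccat{T}}$ for sufficiently large $\beta$); and the quotient $\ccat{T}/\ccat{S}$ is $\beta$-compactly generated, with $\compact[\beta]{(\ccat{T}/\ccat{S})}$ identified, up to idempotent completion, with the Verdier quotient $\compact[\beta]{\ccat{T}}/(\ccat{S}\cap\compact[\beta]{\ccat{T}})$; this is the well-generated analogue of Thomason's localization theorem, due to Neeman. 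By \cref{bijectionPointsRad}, $\ccat{T}$ itself is $\beta$-compactly stratified. This establishes (CS1) for $\ccat{T}/\ccat{S}$.

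The standard correspondence identifies $\Rad(\ccat{T}/\ccat{S})$ with the principal filter $\{\ccat{I}\in\Rad(\ccat{T})\mid\ccat{S}\subseteq\ccat{I}\}$ in $\Rad(\ccat{T})$, and similarly $\Rad[\beta](\compact[\beta]{(\ccat{T}/\ccat{S})})$ with the corresponding filter in $\Rad[\beta](\compact[\beta]{\ccat{T}})$. Under this identification (CS2) is clear, because the points of a principal filter in a spatial frame are exactly the points of the ambient frame which do not send the bottom element of the filter to $0$. Condition (CS3) follows from the tensor property for $\ccat{T}$ combined with frame distributivity: for lifts $X,Y\in\ccat{T}$ of $\bar X,\bar Y\in\ccat{T}/\ccat{S}$, the ideal corresponding to $\rad(\bar X\otimes\bar Y)$ equals $\rad(X\otimes Y)\vee\ccat{S}=(\rad(X)\cap\rad(Y))\vee\ccat{S}=(\rad(X)\vee\ccat{S})\cap(\rad(Y)\vee\ccat{S})$. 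Condition (CS4) follows from the corresponding bijection for $\ccat{T}$ at cardinal $\beta$ by restricting to points that do not vanish on $\ccat{S}$, and (CS5) transfers via \cref{strongly-visible} applied to the injective extension map $\Rad[\beta](\compact[\beta]{(\ccat{T}/\ccat{S})})\hookrightarrow\Rad(\ccat{T}/\ccat{S})$ provided by \cref{ExtensionInjective}.

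The main obstacle is the opening step: one must simultaneously arrange $\beta$-compact generation of the quotient, compatibility of its $\beta$-compact objects with the Verdier quotient of $\compact[\beta]{\ccat{T}}$, and the $\beta$-tensor-property hypotheses of \cref{ExtensionInjective,RadalphaSpatial}. Since $\ccat{S}$ is a tensor ideal it is preserved under tensoring with arbitrary objects, and since \cref{bijectionPointsRad} allows us to enlarge $\beta$ freely, all these requirements can be met by a single sufficiently large regular cardinal.
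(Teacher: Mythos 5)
Your overall route is the paper's: pick a regular $\beta\ge\alpha$ such that $\ccat{S}$ is generated by $\beta$-compact objects (using \cref{le:Rad-set}), note that $\ccat{T}$ is $\beta$-compactly stratified by \cref{bijectionPointsRad}, invoke Neeman's localization theorem for (CS1) and for $\compact[\beta]{(\ccat{T}/\ccat{S})}\simeq \compact[\beta]{\ccat{T}}/\compact[\beta]{\ccat{S}}$, and transfer (CS2)--(CS4) through the correspondence between radical localizing tensor ideals of $\ccat{T}/\ccat{S}$ and those of $\ccat{T}$ containing $\ccat{S}$. One recurring error is directional: the points of the quotient frame $\Rad(\ccat{T}/\ccat{S})\cong\set{\ccat{I}\in\Rad(\ccat{T})}{\ccat{S}\subseteq\ccat{I}}$ are the points $x$ of $\Rad(\ccat{T})$ with $x(\ccat{S})=0$ (their prime elements contain $\ccat{S}$), i.e.\@ the closed set $\pt(\Rad(\ccat{T}))\setminus U(\ccat{S})$ --- not, as you write, the points that do not send the bottom element of the filter to $0$; the same reversal appears in your (CS4) step ("points that do not vanish on $\ccat{S}$"). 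The skeleton survives because the bijection $\pt(\Rad(\ccat{T}))\iso\pt(\RadT{\beta})$ identifies $U(\ccat{S})$ with $U(\compact[\beta]{\ccat{S}})$ (this is where $\rad(\compact[\beta]{\ccat{S}})=\ccat{S}$, hence your choice of $\beta$, is used --- a point worth stating explicitly), and therefore also identifies the complements; but as written you restrict to the wrong subsets.

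The genuine gap is your treatment of (CS5). You apply \cref{strongly-visible} to the extension map $\varphi\colon \Rad[\beta](\compact[\beta]{(\ccat{T}/\ccat{S})})\to\Rad(\ccat{T}/\ccat{S})$. In that lemma the hypothesis (all points locally closed) is on the \emph{source} $F$ and the conclusion is about the \emph{target} $G$, since $\pt(\varphi)$ goes from $\pt(G)$ to $\pt(F)$. So your application would deduce that every point of $\Rad(\ccat{T}/\ccat{S})$ is locally closed \emph{from} the assumption that every point of $\Rad[\beta](\compact[\beta]{(\ccat{T}/\ccat{S})})$ is locally closed --- but the latter is exactly condition (CS5) for $\ccat{T}/\ccat{S}$, i.e.\@ the statement you still have to prove. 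The correct application, which is how the paper concludes, is to the surjective frame morphism $\RadT{\beta}\to\Rad[\beta](\compact[\beta]{(\ccat{T}/\ccat{S})})$ induced by the quotient functor on $\beta$-compact objects: its induced map on points is the injective inclusion of $\pt(\Rad[\beta](\compact[\beta]{(\ccat{T}/\ccat{S})}))$ as the closed subset $\pt(\RadT{\beta})\setminus U(\compact[\beta]{\ccat{S}})$, and every point of $\RadT{\beta}$ is locally closed because $\ccat{T}$ is $\beta$-compactly stratified; then \cref{strongly-visible} yields (CS5) for the quotient.
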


\begin{proof}
First observe that any radical localizing tensor ideal of $\ccat{T}$ is generated by a set of objects, because we assume that $\Rad(\ccat{T})$ is a set, cf.\@ \cref{le:Rad-set}. Thus we find $\beta\ge\alpha$ such that $\ccat{S}$ is generated by a set of $\beta$-compact objects. The category $\ccat{T}$ is $\beta$-compactly stratified by \cref{bijectionPointsRad}. Then the quotient $\ccat{T}/\ccat{S}$ inherits from $\ccat{T}$ the structure of a $\beta$-compactly generated tensor triangulated category, cf.\@ \cite[Theorem~1.14]{Neeman:2001}. The assignment $\ccat{I}\mapsto \ccat{I}\vee \ccat{S}$ gives a surjective frame morphism $\Rad(\ccat{T})\to \Rad(\ccat{T}/\ccat{S})$, because the quotient functor $\ccat{T}\to\ccat{T}/\ccat{S}$ identifies the localizing tensor ideals of $\ccat{T}$ containing $\ccat{S}$ with the localizing tensor ideals of $\ccat{T}/\ccat{S}$. It follows that $\Rad(\ccat{T}/\ccat{S})$ is a spatial frame satisfying the tensor property. The induced map
\begin{equation*}
\pt(\Rad(\ccat{T}/\ccat{S}))\lto \pt(\Rad(\ccat{T}))
\end{equation*}
is injective and identifies $\pt(\Rad(\ccat{T}/\ccat{S}))$ with the closed set $\pt(\Rad(\ccat{T}))\setminus U(\ccat{S})$. We have the analogous properties for the quotient functor 
\begin{equation*}
\compact[\beta]{\ccat{T}} \lto \compact[\beta]{\ccat{T}}/\compact[\beta]{\ccat{S}} \longiso \compact[\beta]{(\ccat{T}/\ccat{S})}\,,
\end{equation*}
where the last functor is an equivalence except when $\beta = \aleph_0$, in which case it is an equivalence up to direct summands. In particular, the induced map
\begin{equation*}
\pt(\Rad[\beta](\compact[\beta]{(\ccat{T}/\ccat{S})})) \lto \pt(\RadT{\beta})
\end{equation*}
identifies $\pt(\Rad[\beta](\compact[\beta]{(\ccat{T}/\ccat{S})}))$ with $\pt(\RadT{\beta})\setminus U(\compact[\beta]{\ccat{S}})$. The bijection 
\begin{equation*}
\pt(\Rad(\ccat{T})) \longiso \pt(\RadT{\beta})
\end{equation*}
identifies $U(\ccat{S})$ with $U(\compact[\beta]{\ccat{S}})$, since $\rad(\compact[\beta]{\ccat{S}})=\ccat{S}$, and this yields the bijection
\begin{equation*}
\pt(\Rad(\ccat{T}/\ccat{S})) \longiso \pt(\Rad[\beta](\compact[\beta]{(\ccat{T}/\ccat{S})}))\,.
\end{equation*}
That every point of $\Rad[\beta](\compact[\beta]{(\ccat{T}/\ccat{S})})$ is locally closed follows from the corresponding property of $\RadT{\beta}$ with \cref{strongly-visible}.
\end{proof}

\begin{remark}
(1) In \cref{dfn:compactly-stratified}, when $\alpha = \aleph_0$, the condition that every point is locally closed is satisfied when the Balmer spectrum (i.e.\@ the space of points of the Hochster dual of $\Rad[\alpha](\compact[\alpha]{\ccat{T}})$) is a noetherian space. This follows from \cref{pointsClosedDownwardsOpen}.

(2) Let $\ccat{T}$ be compactly stratified and suppose that all compact objects are rigid. Then $\Spec(\ccat{T})$ is a discrete space. To see this, fix $x\in\Spec(\ccat{T})$ and write $\{x\}=U\cap V$ as the intersection of an open subset $U$ and a closed subset $V$, which are pre-images of an open subset $U'$ and a closed subset $V'$ under the canonical map $\Spec(\ccat{T})\to \Spec^{\aleph_0}(\ccat{T}^{\aleph_0})$. The open complement of $V'$ corresponds to a thick tensor ideal $\ccat{I}'$ of $\ccat{T}^{\aleph_0}$. Extension $\Rad[\aleph_0](\ccat{T}^{\aleph_0})\to \Rad(\ccat{T})$ maps $\ccat{I}'$ to the localizing tensor ideal $\ccat{I}=\rad(\ccat{I}')$ of $\ccat{T}$ that corresponds to the open complement of $V$ in $\Spec(\ccat{T})$. Then $\ccat{I}$ admits the perpendicular category $\ccat{I}^\perp = \set{Y\in\ccat{T}}{\Hom(X,Y)=0\text{ for all }X\in\ccat{I}}$ as a complement in $\Rad(\ccat{T})$. That $\ccat{I}^\perp$ is a localizing tensor ideal uses the fact that $\ccat{I}$ is generated by rigid objects; see \cite[Theorem~3.3.5]{Hovey/Palmieri/Strickland:1997}. Because a complement in a distributive lattice is unique, it follows that $V$ is open, and therefore $\{x\}$ is open.

(3) The examples of compactly generated tensor triangulated categories which are listed in the introduction are compactly stratified. When $\ccat{T}= \cat{D}(A)$ equals the derived category of a commutative noetherian ring $A$, there are explicit (though not optimal) bounds for $\alpha$ such that $\Spec^\alpha(\compact[\alpha]{\ccat{T}})\cong\Spec(\ccat{T})$ is discrete; see \cite[Proposition~6.5.4]{Balchin/Stevenson:2021}. Not much seems to be known for the derived category $\cat{D}(A)$ when $A$ is not noetherian, though there is an example showing that $\cat{D}(A)$ need not be compactly stratified \cite{Neeman:2000}.

(4) Let $\ccat{T}$ be a compactly generated tensor triangulated category such that compact and rigid objects coincide. In this context Barthel, Heard, and Sanders propose in \cite{Barthel/Heard/Sanders:2021} a definition of `stratification', using the notion of support from \cite{Balmer/Favi:2011}. Fortunately, this definition matches the one from \cref{dfn:compactly-stratified} for $\alpha=\aleph_0$.
\end{remark}


\end{document}